\newcommand{%
	\begingroup
	\fontsize{7pt}{12pt}
	
	\import{./figures/}{.pdf_tex}
	\endgroup
}[2][0.8]{%
	\begingroup
	\fontsize{7pt}{12pt}
	
	\import{./figures/}{#2.pdf_tex}
	\endgroup
}
\newtheorem{problem}{Problem}[section]
\newtheorem{theorem}[problem]{Theorem}
\newtheorem{proposition}[problem]{Proposition}
\newtheorem{corollary}[problem]{Corollary}
\newtheorem{conjecture}[problem]{Conjecture}
\newtheorem{question}[problem]{Question}
\newtheorem{lemma}[problem]{Lemma}
\newtheorem{remark}[problem]{Remark}
\theoremstyle{definition}
\newtheorem{definition}[problem]{Definition}
\newtcbox{\redbox}{colback=red!5!white,colframe=red!75!black, tcbox raise base}
\author{Xuan Yao}
\address{Department of Mathematics, Cornell University, Ithaca, NY, 14850}
\email{xy346@cornell.edu}
\title{A Note on Scalar curvature comparison rigidity for compact domains}
\begin{document}

\begin{abstract}
    We prove a generalization of Gromov's conjecture on scalar curvature rigidity of convex polytopes to arbitrary convex Riemannian polytope type domains via harmonic spinors on convex domians with boundary condition constructed by Brendle. In particular, we prove a rigidity result on comparison of scalar curvature and scaled mean curvature on the boundary for any convex domain in Euclidean space, which is a parallel of Shi-Tam's results.
\end{abstract}
\maketitle
\section{Introduction}
Comparison theorems of scalar curvature and mean curvature of a convex compact domain has been an interesting topic in differential geometry. Shi-Tam \cite{shi2002positive} proved a comparison theorem for $L^1$ norm of boundary mean curvature under the non-negative scalar curvature and the boundary isometric embedding assumptions. Gromov \cite{gromov2014dirac} conjectured the Riemannian polytope comparison theorems in high dimensions, which is true in dimension $2$ due to the Gauss-Bonnet Theorem. 
\begin{conjecture}[Gromov \cite{gromov2014dirac}]
    Let $P$ be a  convex $n$-dim polytope in $\mathbb R^n$, equip $P$ with Riemannian metric $g$ satisfying:
    \begin{enumerate}
        \item each face is mean convex w.r.t. $g$;
        \item the dihedral angles between paces w.r.t. $g$ is no larger than the diheral angles w.r.t. the Euclidean metric $g_0$;
        \item $R_g\geq 0$,
    \end{enumerate}
    then the metric $g$ must be flat.
\end{conjecture}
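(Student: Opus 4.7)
\medskip

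\noindent\textbf{Proof proposal.} My approach is to prove the conjecture via harmonic spinor methods, invoking Brendle's boundary condition for the Dirac operator on convex domains. The plan is first to approximate the polytope $P$ by smooth strictly convex Riemannian domains $(P_\epsilon, g_\epsilon) \subset (P,g)$, in such a way that the mean convexity of each face and the dihedral angle comparison are quantitatively preserved: the extrinsic curvature of $\partial P_\epsilon$ near each rounded edge concentrates with distributional mass governed by the angular defect between $g$ and the Euclidean metric $g_0$.

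On each smooth approximant I would apply Brendle's chirality-type boundary projection for the Dirac operator $D_\epsilon$ on the spinor bundle. This is a local elliptic boundary condition adapted to the shape operator of $\partial P_\epsilon$; solvability, ensured by mean convexity, allows one to produce, for every parallel spinor $\phi_0$ on Euclidean space, a harmonic spinor $\psi_\epsilon$ on $P_\epsilon$ matching $\phi_0$ as boundary data. The Lichnerowicz--Weitzenb\"ock identity $D^2 = \nabla^*\nabla + R_g/4$ combined with integration by parts yields
\begin{equation*}
\int_{P_\epsilon} \bigl(|\nabla \psi_\epsilon|^2 + \tfrac{1}{4} R_{g_\epsilon} |\psi_\epsilon|^2\bigr)\, dV \;+\; \int_{\partial P_\epsilon} Q_\epsilon(\psi_\epsilon)\, dA \;=\; 0,
\end{equation*}
where $Q_\epsilon$ is a pointwise quadratic form in $\psi_\epsilon$ that is non-negative exactly when $\partial P_\epsilon$ is mean convex. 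Sending $\epsilon \to 0$, the bulk integrand remains non-negative by hypothesis (3), the smooth part of the boundary integral is non-negative by hypothesis (1), and the concentrated contributions along the codimension-two edges are non-negative by hypothesis (2). All three terms must therefore vanish: in particular $\nabla \psi = 0$ on $(P,g)$, and varying $\phi_0$ through a basis of Euclidean parallel spinors yields a trivialization of the spinor bundle by parallel sections, which forces $g$ to be flat.

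The main obstacle is the edge analysis in the limit $\epsilon \to 0$. Brendle's construction is designed for smooth convex hypersurfaces, and the crux of the argument is to show that $Q_\epsilon$ concentrates along each codimension-two stratum to \emph{precisely} the dihedral angle gap, with the correct sign and no uncontrolled distributional remainder. This requires uniform $H^1$ estimates on $\psi_\epsilon$ across the thin boundary layer around each edge, together with an asymptotic expansion of the rounded boundary modelled on the Euclidean dihedral cone. A secondary subtlety is regularity at the vertices, where the model boundary condition is dictated by the Euclidean polytope cone at the vertex and uniform control should follow from the scaling symmetry there; this mirrors the role played by the Gauss--Bonnet boundary term in the two-dimensional case.
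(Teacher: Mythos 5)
You are attempting to prove a statement that the paper records as a \emph{conjecture}, not a theorem. Gromov's dihedral rigidity conjecture as stated here --- with only an inequality on dihedral angles --- is still open; the paper explicitly lists it as such and cites only partial results (Gromov's cubical case, Li's low-dimensional capillary-surface results, Brendle's result under the matching angle hypothesis, Brendle--Wang's result under acute angle comparison). The paper's own main theorem is a different statement: it assumes the matching angle \emph{equality} $g_0(N_i,N_j)=g(\nu_i,\nu_j)$ at singular points (not the one-sided inequality of the conjecture) together with a pointwise comparison $H_g^2 g\geq H_0^2 g_0$ on the regular part of the boundary, and that is what the Morrey norm estimates in Section~\ref{sec:morrey norm} are built to deliver.

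The gap in your proposal is exactly the step you flag as ``the main obstacle.'' You assert that $Q_\epsilon$ concentrates along each codimension-two stratum to the dihedral angle gap with the correct sign and no uncontrolled remainder, but you give no mechanism for this, and in fact this is precisely the step that fails (or, at least, is not known to work) with only the one-sided angle comparison. In Brendle's framework the boundary quadratic form is controlled by $H-\|dN\|_{tr}$ for a Gauss-type map $N$ on the smoothed boundary, and what must be shown is a uniform Morrey bound on $\max\{-(H-\|dN\|_{tr}),0\}$ over the approximants; near codimension-two (edge) and codimension-three (vertex) strata this bound is obtained from the angle hypothesis, not from mean convexity, and it is exactly here that the matching angle hypothesis (or, in Brendle--Wang, acuteness) is used. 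Without such a hypothesis the near-edge contribution is not under control, and the Weitzenb\"ock argument does not close. Declaring the concentration ``precise with the correct sign'' is the content of the open problem, not a step one can invoke.

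There are also smaller technical inaccuracies worth fixing. The smoothing in the paper is done via the sublevel sets $\Omega_\lambda=\{\sum_i e^{\lambda u_i}\le 1\}$, not by choosing smooth convex subdomains $(P_\epsilon,g_\epsilon)\subset(P,g)$; the choice matters because the Morrey estimates are tied to how the exponential weights localize near the strata of $\partial P$. The boundary condition is the chirality-type projection $\chi s=s$ built from a map $N:\Sigma\to S^{n-1}$ homotopic to the Euclidean Gauss map; one does not ``match $\phi_0$ as boundary data.'' The existence of a nontrivial solution comes from a Fredholm index lower bound (Proposition~\ref{prop:solvability}), which requires convexity of the domain and the homotopy condition on $N$, not merely mean convexity of the boundary. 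Finally, the whole spinor argument is set up for $n$ odd; the even-dimensional case is handled by taking a product with an interval, which your outline does not address.

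If you want a provable statement along these lines, you should either add the matching angle hypothesis at singular strata (and then your outline aligns in spirit with the paper's Theorem and Brendle's Section~4 argument), or restrict to the acute-angle setting and use the Brendle--Wang smoothing, or restrict to smooth convex domains with the $H_g^2 g\geq H_0^2 g_0$ comparison (the paper's Corollary), where there are no singular strata and the edge analysis disappears entirely.
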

Much progress has been made on Gromov's conjecture. Gromov proved the cubical case and sketched a proof using Dirac operator and smoothout construction, see \cite{gromov2014dirac} \cite{gromov2021lecturesscalarcurvature} \cite{gromov2023convexpolytopesdihedralangles}. Using capilary minimal surfaces, Li \cite{li2020polyhedron} proved the conjecture for a large class of polytopes in dimension $3$, and he generalized his method to dimension $7$; see \cite{li2024dihedral}. Wang-Xie-Yu proposed developing spinor method on manifolds with corners \cite{wang2021gromov}. Brendle proved the Gromov's conjecture under the \textit{Matching angle Hypothesis} \cite{brendle2023matchingangle} by constructing a proper boundary condition for harmonic spinors and a smooth-out procedure. Brendle-Wang \cite{brendle2023gromovsrigiditytheorempolytopes} constructed another smooth-out procedure and proved the Gromov's conjecture under the acute angle comparison.

It is natural to consider the following question.
\begin{question}
    Can we generalize Gromov's conjecture to more general convex domians in $\mathbb R^n$?
\end{question}
We give affirmative answer to the question and we have the following results.
\begin{theorem}
    Suppose $\Omega^n$ is a convex Riemannian polytope type domain in $\mathbb{R}^n$, $n\geq 3$. Assume that $g$ is a Riemannian metric on $\Omega$ and it satisfies:
    \begin{enumerate}
        \item $R_g\geq 0$;
        \item $H_g^2g\geq H_0^2g_0$ for any regular point of $\partial \Omega$, and the metric tensors are restricted to $T\partial\Omega$;
        \item the matching angle hyphothesis is satisfied for any singular point of $\partial\Omega$.
    \end{enumerate}
    Then the metric $g$ must be Euclidean.
\end{theorem}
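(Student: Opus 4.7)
The plan is to adapt Brendle's harmonic spinor proof of Gromov's conjecture \cite{brendle2023matchingangle} to the setting of general convex polytope-type domains in $\mathbb{R}^n$, with the strengthened boundary hypothesis $H_g^2 g \geq H_0^2 g_0$ replacing simple mean convexity. The singular boundary structure is handled exactly as in Brendle's construction: the matching angle hypothesis at each singular stratum enables a face-wise Clifford-projection boundary condition, compatible across codimension-two strata, that renders the associated Dirac boundary value problem self-adjoint and Fredholm.

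First I would fix a parallel spinor $\psi_0$ on the Euclidean background $(\Omega, g_0)$, and after smoothing corners in the style of \cite{brendle2023matchingangle, brendle2023gromovsrigiditytheorempolytopes}, solve for a harmonic spinor $\psi$ on $(\Omega, g)$ satisfying Brendle's face-wise boundary condition with $\psi$ modeled on $\psi_0$. One then passes to the limit using that hypotheses (1)--(3) are preserved under the relevant approximation.

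With $\psi$ in hand, the integrated Lichnerowicz-Weitzenb\"ock identity reads
\begin{equation*}
\int_{\Omega} \Bigl( |\nabla \psi|^2 + \tfrac{R_g}{4} |\psi|^2 \Bigr) \, dV_g + \int_{\partial \Omega} \mathcal{B}(\psi, \psi) \, dA_g = 0,
\end{equation*}
where, thanks to the boundary condition and matching angles, codimension-two corner contributions drop out and $\mathcal{B}(\psi, \psi)$ reduces on the smooth part of $\partial \Omega$ to a Hermitian density controlled schematically by $H_g |\psi|_g^2$ together with terms coming from $\psi_0$. The crucial step is to show that hypothesis (2), the tensor inequality $H_g^2 g \geq H_0^2 g_0$ on $T\partial \Omega$, combined with $H_0 \geq 0$ (Euclidean convexity of $\Omega$), implies $\mathcal{B}(\psi,\psi)$ has the correct sign pointwise. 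Diagonalizing $g|_{T\partial \Omega}$ with respect to $g_0|_{T\partial \Omega}$, this reduces to an algebraic inequality that the tensorial form of (2) is tailored to deliver, once one accounts for the change of area element $dA_g / dA_{g_0}$ and the comparison of spinor norms on the boundary.

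The main obstacle is precisely this boundary positivity: unlike the classical Gromov setting, where only pointwise $H_g \geq 0$ is needed, here the boundary inequality mixes the metrics $g$ and $g_0$, so spinor norm, area element, and mean curvature must all be tracked through the change of metric simultaneously. Once the boundary term is controlled, $R_g \geq 0$ forces every term in the identity to vanish: $\nabla \psi = 0$, $R_g \equiv 0$, and equality holds throughout hypothesis (2). Varying $\psi_0$ over a basis of parallel spinors on $(\mathbb{R}^n, g_0)$ then produces a parallel frame on $(\Omega, g)$, so $g$ is flat; the equality case of (2) forces $g|_{T\partial \Omega} = g_0|_{T\partial \Omega}$, and since $\Omega$ is simply connected a developing-map argument identifies $(\Omega, g)$ isometrically with $(\Omega, g_0)$, i.e., $g$ is Euclidean.
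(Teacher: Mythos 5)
Your proposal captures the right overall strategy (Brendle's spinor method, smoothing corners, Lichnerowicz identity, boundary positivity), but it misidentifies the actual boundary quantity that must be controlled, and this is exactly where hypothesis (2) enters the paper's proof. In Brendle's integrated Lichnerowicz formula for the boundary condition $\chi s = s$, the boundary density is not controlled by $H_g|\psi|^2$ alone; it is $-\tfrac{1}{2}(H_g - \|dN\|_{\mathrm{tr}})|\psi|^2$, where $N:\Sigma \to S^{n-1}$ is the map entering the definition of $\chi$. In the classical polytope case each $u_i$ is linear, so $N_i$ is constant on each face and $\|dN_i\|_{\mathrm{tr}} \equiv 0$; that is why mean convexity suffices there. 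In the present setting $N_i$ is the Euclidean Gauss map of a curved hypersurface, so $\|dN_i\|_{\mathrm{tr}}$ is genuinely nonzero, and the whole content of Lemma~\ref{lem:comparison} is to show that the tensorial inequality $H_g^2 g \geq H_0^2 g_0$ forces $\|dN_i\|_{\mathrm{tr}} \leq H_g$ pointwise on each regular boundary piece. Your phrase ``controlled schematically by $H_g|\psi|_g^2$'' hides precisely the term whose sign is at issue, and the diagonalization you invoke (of $g$ against $g_0$) is not the one used: the paper diagonalizes $dN_i$ via singular values with respect to both metrics and exploits that $\sum\lambda_l = H_0$ for the Euclidean Gauss map.

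The second gap is the claim that ``one passes to the limit using that hypotheses (1)--(3) are preserved under the relevant approximation.'' They are not: on the smoothed surface $\Sigma_\lambda$ the quantity $H - \|dN\|_{\mathrm{tr}}$ acquires a negative part near edges and vertices that grows like $\lambda$, and the proof cannot be closed by a pointwise sign argument there. The paper instead establishes Morrey norm estimates showing that $\max\{-(H-\|dN\|_{\mathrm{tr}}),0\}$ tends to zero in a Morrey space as $\lambda\to\infty$, splitting $\Sigma_\lambda$ into near-face, near-edge, and near-vertex regions, with the near-face part controlled by Lemma~\ref{lem:comparison} and the other two by the matching angle hypothesis plus area bounds. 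This estimate is what makes the limiting argument in Brendle's Section 4 go through, and your proposal does not supply a substitute for it. (You also omit the reduction from even $n$ to odd $n$ via $\Omega\times[0,1]$, which is needed because the spinor construction is carried out for odd $n$.)
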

\begin{remark} $ $
    \begin{enumerate}
        %\item When we have the comparison $H_g^2g\geq H_0^2g_0$ on $\partial \Omega$, we restrict $g,g_0$ to $\partial\Omega$;
        \item Chai-Wang \cite{chai2023scalar} proved a rigidity comparison result for rotationally symmetric domains in dimension $3$ with different comparison assumptions;
        \item To guarantee the existence of harmonic spinors, we need the convexity assumption of the compact domain, we conjecture the convexity could be relaxed to weakly convex and mean convex.
        %\item It is also natural to generalize the smooth version of Gromov's conjecture to exterior domain; see Section \ref{sec: further discussion} for more details.
    \end{enumerate}
\end{remark}

As a corollary, we have
\begin{corollary}
    Suppose $\Omega^n$ is a smoooth convex domain in $\mathbb R^n$, with dimension $n\geq 3$. Assume that $g$ is a Riemannian metric on $\Omega$ and it satisfies:
    \begin{enumerate}
        \item $R_g\geq 0$;
        \item $H_g^2g\geq H_0^2g_0$ on $\partial \Omega$, where we restrict the metric tensors to $T\partial\Omega$;
    \end{enumerate}
    then the metric $g$ must be Euclidean.
\end{corollary}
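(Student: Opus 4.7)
The plan is to deduce the corollary as the special case of the theorem in which the boundary $\partial\Omega$ consists of a single smooth top-dimensional face with empty lower-dimensional strata. First I would check that a smooth convex domain $\Omega\subset\mathbb{R}^n$ qualifies as a degenerate convex Riemannian polytope type domain: there is one face and no edges or vertices. Next I would verify that the hypotheses of the theorem collapse onto the hypotheses of the corollary: hypothesis (1) matches, hypothesis (2) of the theorem (over regular points) is precisely hypothesis (2) of the corollary since every boundary point is regular, and hypothesis (3), the matching angle hypothesis, is vacuous because the singular stratum is empty. Applying the theorem then yields that $g$ is Euclidean. The underlying analytic input, Brendle's harmonic-spinor boundary value problem on a convex domain, is in fact simplest in this setting: no smoothing procedure along corners is needed.

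If the definition of polytope type domain in the theorem is not flexible enough to cover this degenerate case, the alternative plan is approximation. I would choose a sequence of convex Euclidean polytopes $P_k\subset\Omega$ inscribed in $\partial\Omega$ with vertex set becoming dense and mesh size tending to zero. Because $\partial\Omega$ is smoothly strictly convex, each face of $P_k$ is $C^0$-close to a tangent hyperplane of $\partial\Omega$ and therefore (by continuity of $H_g$ and nondegeneracy in $g$) remains mean convex in $g$ for $k$ large; moreover the Euclidean dihedral angles along the edges of $P_k$ converge to $\pi$, so after a mild perturbation one arranges the $g_0$-dihedral angles to dominate the $g$-dihedral angles at every new vertex, confirming the matching angle hypothesis. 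Invoking the theorem on each $(P_k,g|_{P_k})$ gives that $g|_{P_k}$ is Euclidean, and a Hausdorff-limit argument promotes this to $(\Omega,g)$.

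The main obstacle in the approximation route is precisely the verification of the matching angle hypothesis at the vertices of $P_k$: one must ensure that the dihedral angles measured in $g$, which a priori differ from those in $g_0$ by the pointwise distortion of $g$, remain bounded by the Euclidean angles. The heuristic reason this succeeds is that $g$ and $g_0$ agree to zeroth order near any fixed boundary point, so by shrinking the diameter of the stars around each vertex of $P_k$ the discrepancy can be absorbed into the angular slack $\pi-\theta_{g_0}$, which is itself going to zero but strictly slower than any fixed modulus of continuity of $g$. Modulo this careful quantitative choice of inscribed polytope, the argument is routine, and in either formulation the proof reduces to an invocation of the theorem.
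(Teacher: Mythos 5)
Your primary plan---viewing a smooth convex domain as a Riemannian polytope type domain with $k=1$, so that every boundary point is regular and the singular stratum is empty---is exactly the route the paper intends: the remark in Section~2 explicitly identifies the $k=1$ case with smooth domains, Lemma~\ref{lem:comparison} supplies the $\|dN\|_{tr}\leq H_g$ comparison directly, and the paper states the corollary without further argument precisely because the theorem's hypotheses collapse onto the corollary's (with the matching angle hypothesis vacuously satisfied). That part of your write-up is correct and is the paper's proof.

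Your backup approximation plan is not needed, and as written it would not work. First, the matching angle hypothesis used in this paper (and in Brendle's work it is imported from) is an \emph{equality}: $g_0(N_i,N_j)=g(\nu_i,\nu_j)$ at singular boundary points. It is not an inequality of dihedral angles, so ``arranging the $g_0$-dihedral angles to dominate the $g$-dihedral angles after a mild perturbation'' is the wrong target; for a generic $g$ one cannot perturb an inscribed Euclidean polytope so that the two angles agree exactly along all new edges. Second, an inscribed Euclidean polytope $P_k$ has flat faces, hence $H_0\equiv 0$ there, so hypothesis~(2) of the theorem degenerates to $H_g^2g\geq 0$ and no longer encodes any information about $H_g$; in particular it would not reproduce the mean-convexity input that the Morrey estimate actually relies on. Finally, the concluding ``Hausdorff-limit'' step is too vague: knowing that $g|_{P_k}$ is flat for each $k$ does not immediately yield a single Euclidean chart on $\Omega$ without an additional gluing/compatibility argument. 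None of this affects the corollary, since your first plan already closes it.
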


Another natural question is 
\begin{question}
    Do we have a corresponding exterior result which serves as a variant of the Positive Mass Theorem?
\end{question}
We give an affirmative answer to this question when the domain is $\mathbb R^3\setminus B_r(0)$; see Section \ref{sec: further discussion} for more details.

\subsection{Outline of the proof}
We begin by introducing the prelimaries on spin geometry and Brendle's construction of boundary condition for harmonic spinors in Section \ref{sec:pre}. 

The key ingredient of the proof is Morrey norm estimates of $\max\{-(H_g-\|dN\|_{tr}),0\}$. As in \cite{brendle2023matchingangle}, we divide the approximation surface into three parts: near face parts (co-dim $1$), near edge parts (co-dim 2) and near vertex parts (co-dim 3), the estimates follow from the (mean curvature)-metric comparison and angle conditions, and the proof is complete, see Section \ref{sec:morrey norm}. 

We discuss the exterior version of Gromov's conjecture in Section \ref{sec: further discussion} and prove a special case when the convex domain is a stantand ball in Euclidean space via Inverse Mean Curvature Flow.

\subsection{Acknowledgements}
I would like to thank Prof. Xin Zhou and Prof. Xianzhe Dai for their helpful discussions on this topic. I am also grateful to Dongyeong Ko and Liam Mazurowski for their helpful discussions on this project. X.Y. is supported by Hutchinson Fellowship.
\begin{comment}
\begin{theorem}
    Suppose $\Omega^n$ is a convex Riemannian polytope type domain in $\mathbb{R}^n$.
    Assume that $g$ is a Riemannian metric on $\Omega$ and it satisfies:
    \begin{itemize}
        \item $R_g\geq 0$;
        \item $H_g^2g\geq H_0^2g_0$ for any regular point of $\partial \Omega$;
        \item the angle comparison hyphothesis is satisfied for any singular point of $\partial\Omega$.
    \end{itemize}
    Then the metric $g$ must be Euclidean.
\end{theorem}
\end{comment}
\section{Prelimaries}\label{sec:pre}

\subsection{Definitions and Notations}
We first introduce the definition of {\em Riemannian polytope type domain}.
\begin{definition}
    Suppose $\Omega$ is a bounded open domain of a smooth manifold $M$, we 
    say that $\Omega$ is a {\em Riemannian polytope type domain} if it satisfies:
    \begin{enumerate}
        \item $\Omega$ is smooth in the interior and $\Omega$ is not an empty set;
        \item $\Omega=\cap_{i\in I}\{x\in M:u_i(x)\leq 0\}$, $\partial\Omega=\cap_{i\in I}\{x\in M: u_i(x)=0\}$, where $u_i:M\to \mathbb R$ is 
              a smooth function and $0$ is a regular value of $u_i$, for each $i\in I$, $|I|=k$ is finite.
    \end{enumerate}
\end{definition}

\begin{remark}
    If $u_i:\mathbb R^n\to\mathbb R$ are linear functions for each $i\in I$,
    then $\Omega$ is a standard polytope in $\mathbb{R}^n$.
\end{remark}

\begin{remark}
    If $k=1$ then $\Omega$ is a smooth open domain.
\end{remark}

\begin{definition}[Matching Angle Hypothesis] 
We say a Riemannian polytope type domain $\Omega$ satisfies the {\em matching angle hypothesis} for any singular point $x\in \partial\Omega$, if for any $x\in \partial\Omega$ such that $u_i(x)=u_j(x)=0$, $i,j\in I$, then we have
    $g_0(N_i,N_j)=g(\nu_i,\nu_j)$ at $x$. Here, $N_{l}=\frac{\nabla^{g_0}u_l}{|\nabla^{g_0}u_l|}$, $\nu_r=\frac{\nabla^g u_l}{|\nabla^g u_l|}$, $l=i,j$.
\end{definition}
\begin{comment}
\begin{definition}[Acute Angle Comparison Hyphothesis]
    We say a Riemannian polytope type domain $\Omega$ satisfies the {\em acute angle comparison hypothesis} for any singular point $x\in\partial\Omega$, if for any $x\in\partial\Omega$ such that $u_i(x)=u_j(x)=0$, $i,j\in I$, then we have $\langle \nu_i,\nu_j\rangle\leq \langle N_i,N_j\rangle_{g_0}\leq 0$ at $x$. Here $N_l,\nu_l$ are defined as before.
\end{definition}
\end{comment}
\subsection{Spinor method}
In this subsection, we recall Brendle's construction of boundary condition for
harmonic spinor equation, which serves as a key ingredient in the resolution 
of Gromov's conjecture under {\em the Matching angle hyphothesis}, see Brendle \cite{brendle2023matchingangle}.
%and under {\em the Angle comparison hyphothesis}, see Brendle-Wang \cite{brendle2023gromovsrigiditytheorempolytopes}.

We assume that $\Omega\subset \mathbb{R}^n$ is a convex bounded open domain in $\mathbb{R}^n$ and $n\geq 3$ is an odd integer throughout this section.

Let $\{E_1,E_2,\dots, E_n\}$ be the orthonormal basis of $\mathbb{R}^n$, $Cl(n,\mathbb C)$ be the Clifford algebra. Recall that $Cl(n,\mathbb C)$ is generated by $1, E_1,\dots, E_n$ via the relation:
\[
E_iE_j+E_jE_i=-2\delta_{ij}.
\]

Denote the complex vector space of dimension $m=[\frac{n}{2}]$ as $\Delta_n$, which is equipped with a Hermitian inner product. The Spin representation gives a surjective algebra homomorphism $\hat{\rho}: Cl(n,\mathbb C)\to End(\Delta_n)$. Suppose we fix an orthornormal basis of $\Delta_n$, $\{\hat{s}_1,\hat{s}_2,\dots,\hat{s}_m\}$, then we define:
\[
\omega_{a\alpha\beta}:=\langle \rho(E_a)\hat{s}_{\alpha},\hat{s}_{\beta}\rangle,
\]
for $a\in\{1,2\dots, n\}$, $\alpha,\beta\in\{1,2,\dots,m\}$. We will omit the $\hat{\rho}$ for simplification.

For a general metric $g$ on $\Omega$, we use $\{e_1,e_2\dots,e_n\}$ to denote a local orthonormal frame, $\mathcal{S}$ to denote the spinor bundle on $\Omega$. Recall the definition of the Dirac operator is 
\[
\mathcal D s:=\sum_{i=1}^n e_i\cdot \nabla_{e_i}s,\quad \forall s\in \mathcal{S}.
\]

Denote $\partial\Omega$ as $\Sigma$. Suppose $\{e_1,\dots, e_{n-1}\}$ is a local orthonormal frame of $\Sigma$, and $\nu$ is the boundary unit normal, then the boundary Dirac operator is defined as
\[
\mathcal{D}^{\Sigma} s:=\sum_{i=1}^{n-1}\nu\cdot e_i\cdot \nabla_{e_i}s+\frac{1}{2}H s,\quad \forall s\in \mathcal{S}|_{\Sigma}.
\]

Given a map $N:\Sigma\to S^{n-1}$, we define a bundle map $\chi: \mathcal{E}|_{\Sigma}\to\mathcal{E}|_{\Sigma}$ as
\[
(\chi s)_{\alpha}=-\sum_{a=1}^n\sum_{\beta=1}^m\langle N,E_a\rangle \omega_{a\alpha\beta}\nu\cdot s_{\beta},
\]
where $\mathcal{E}=\bigoplus_{m}\mathcal{S}$ is a complex bundle over $\Omega$.
\begin{proposition}[Brendle \cite{brendle2023matchingangle}]
    Suppose that $s=(s_1,\dots,s_m)\in \mathcal{E}$ is an $m$-tuple of spinors, then we have
    \begin{align*}
        &-\int_{\Omega}\sum_{\alpha=1}^m|\mathcal{D}s_{\alpha}|^2dV_g+\int_{\Omega}\sum_{\alpha=1}^m|\nabla s_{\alpha}|^2dV_g+\frac{1}{4}\int_{\Omega}\sum_{\alpha=1}^mR_g|s_{\alpha}|^2dV_g\\
        &\leq \frac{1}{2}\int_{\Sigma}\sum_{\alpha=1}^m\langle \mathcal{D}^{\Sigma}s_{\alpha},s_{\alpha}-(\chi s)_{\alpha}\rangle da_g+\frac{1}{2}\int_{\Sigma}\sum_{\alpha=1}^m\langle s_{\alpha}-(\chi s)_{\alpha},\mathcal{D}^{\Sigma}s_{\alpha}\rangle da_g\\
        &-\frac{1}{2}\int_{\Sigma}(H-\|dN\|_{tr})\left(\sum_{\alpha=1}^m |s_{\alpha}|^2\right)da_g.
    \end{align*}
\end{proposition}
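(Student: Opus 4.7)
The plan is to combine the classical Lichnerowicz--Weitzenböck identity with a careful rewriting of the boundary integrand via Brendle's chirality map $\chi$. For each spinor $s_\alpha$, $\alpha=1,\dots,m$, I would start from the pointwise identity $\mathcal{D}^2 s_\alpha=\nabla^*\nabla s_\alpha+\tfrac14 R_g s_\alpha$, pair with $s_\alpha$ in the Hermitian inner product on $\mathcal{S}$, integrate over $\Omega$, and apply Green's formula for both the Dirac square and the rough Laplacian $\nabla^*\nabla$. Summing over $\alpha$ reduces the LHS of the claim to the purely boundary integral
\[
\int_\Sigma\sum_\alpha\bigl(\mathrm{Re}\langle\nabla_\nu s_\alpha,s_\alpha\rangle-\mathrm{Re}\langle\nu\cdot\mathcal{D}s_\alpha,s_\alpha\rangle\bigr)\,da_g.
\]

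Next, along $\Sigma$ I would decompose $\mathcal{D}s=\nu\cdot\nabla_\nu s+\sum_{i=1}^{n-1}e_i\cdot\nabla_{e_i}s$, multiply on the left by $\nu$, and use the Clifford relations $\nu\cdot\nu=-1$ and $\nu\cdot e_i+e_i\cdot\nu=0$ to derive the pointwise identity $\nabla_\nu s_\alpha=-\nu\cdot\mathcal{D}s_\alpha+\mathcal{D}^\Sigma s_\alpha-\tfrac12 Hs_\alpha$. Substituting into the boundary integrand and using skew-Hermiticity of Clifford multiplication by $\nu$, the terms involving $\mathcal{D}s_\alpha$ collapse and the boundary integral becomes
\[
\int_\Sigma\sum_\alpha\mathrm{Re}\langle\mathcal{D}^\Sigma s_\alpha,s_\alpha\rangle\,da_g - \tfrac12\int_\Sigma H\sum_\alpha|s_\alpha|^2\,da_g.
\]
Introducing $\chi$ by adding and subtracting $\mathrm{Re}\sum_\alpha\langle\mathcal{D}^\Sigma s_\alpha,(\chi s)_\alpha\rangle$ and using the symmetric Hermitian identity $\tfrac12\langle\mathcal{D}^\Sigma s_\alpha,s_\alpha-(\chi s)_\alpha\rangle+\tfrac12\langle s_\alpha-(\chi s)_\alpha,\mathcal{D}^\Sigma s_\alpha\rangle=\mathrm{Re}\langle\mathcal{D}^\Sigma s_\alpha,s_\alpha-(\chi s)_\alpha\rangle$ reproduces the RHS of the claim, modulo the pointwise bound
\[
\mathrm{Re}\sum_\alpha\langle\mathcal{D}^\Sigma s_\alpha,(\chi s)_\alpha\rangle\leq\tfrac12\|dN\|_{tr}\sum_\alpha|s_\alpha|^2.
\]

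This last pointwise bound is the only place the inequality (rather than equality) enters, and it is the \emph{main obstacle}. To establish it I would substitute the explicit formula $(\chi s)_\alpha=-\sum_{a,\beta}\langle N,E_a\rangle\omega_{a\alpha\beta}\nu\cdot s_\beta$ and expand $\mathcal{D}^\Sigma s_\alpha=\sum_i\nu\cdot e_i\cdot\nabla_{e_i}s_\alpha+\tfrac12 Hs_\alpha$. The $\tfrac12 H$-pieces cancel exactly (using $|N|=1$ so that $N\perp dN$), while in the derivative piece the terms where $\nabla_{e_i}$ acts on $s_\beta$ contribute purely imaginary quantities (via the skew-symmetry $\omega_{a\alpha\beta}=-\overline{\omega_{a\beta\alpha}}$ arising from skew-Hermiticity of Clifford multiplication by $E_a$), whose real part vanishes. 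What survives is a bilinear form in the tangential derivatives of $N$ along $\Sigma$, and a Cauchy--Schwarz estimate applied eigenvalue-by-eigenvalue to the linear map $dN:T\Sigma\to T_N S^{n-1}$ yields the trace-norm bound. Balancing the spin-representation coefficients $\omega_{a\alpha\beta}$ against the tangential derivatives of $N$ so that the trace norm $\|dN\|_{tr}$ (and not a less favorable norm) emerges is the delicate spin-algebraic part of the argument.
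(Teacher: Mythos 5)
Your overall architecture is the right one and matches Brendle's: pair the Lichnerowicz identity $\mathcal{D}^2=\nabla^*\nabla+\tfrac14 R_g$ with $s_\alpha$, integrate by parts, and use $\nu\cdot\mathcal{D}s+\nabla_\nu s=\mathcal{D}^\Sigma s-\tfrac12 Hs$ to reduce to the boundary term $\mathrm{Re}\int_\Sigma\langle\mathcal{D}^\Sigma s_\alpha,s_\alpha\rangle\,da_g-\tfrac12\int_\Sigma H|s_\alpha|^2\,da_g$; then add and subtract $\chi s$. Up to this point your computation is correct, and you have also isolated the only genuinely nontrivial step, namely showing $\sum_\alpha\mathrm{Re}\int_\Sigma\langle\mathcal{D}^\Sigma s_\alpha,(\chi s)_\alpha\rangle\leq\tfrac12\int_\Sigma\|dN\|_{tr}\sum_\alpha|s_\alpha|^2$.

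However, your proposed justification of this last step has a real gap: you claim it as a \emph{pointwise} bound, but it cannot be one. The left side is linear in $\nabla^\Sigma s$ while the right side is algebraic in $s$; for instance when $\Sigma$ is a flat face with constant $N$ one would need $\mathrm{Re}\langle\mathcal{D}^\Sigma s,\chi s\rangle\leq 0$ at every point, which fails since $\mathcal{D}^\Sigma s$ can be prescribed independently of $s$ at a point. The correct mechanism is an integration by parts on $\Sigma$: using that $\mathcal{D}^\Sigma$ is $L^2(\Sigma)$--self-adjoint and that $\chi$ is a pointwise self-adjoint involution, one rewrites $2\,\mathrm{Re}\int_\Sigma\langle\mathcal{D}^\Sigma s_\alpha,(\chi s)_\alpha\rangle=\int_\Sigma\langle(\chi\mathcal{D}^\Sigma+\mathcal{D}^\Sigma\chi)s_\alpha,s_\alpha\rangle$, and then checks that the anticommutator $\chi\mathcal{D}^\Sigma+\mathcal{D}^\Sigma\chi$ is a \emph{zeroth-order} operator, so that a pointwise algebraic estimate of it against $\|dN\|_{tr}$ makes sense. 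Your "terms where $\nabla_{e_i}$ acts on $s_\beta$" are precisely what is produced by this integration by parts, not something that appears pointwise.

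There is also a factual error in your account of the $\tfrac12 H$-cancellation. It is not a consequence of $|N|=1$ (equivalently $N\perp dN$); that fact is what guarantees $dN$ maps into $T_NS^{n-1}$ and is relevant only to the trace-norm bound. The $H$-terms cancel because the anticommutation $(\nu\cdot e_i\cdot)\chi=-\chi(\nu\cdot e_i\cdot)$ kills the first-order part of the anticommutator, leaving $H\chi$ from the explicit $\tfrac12 H$ in $\mathcal{D}^\Sigma$, and this is offset by the shape-operator contribution of $\nabla_{e_i}\nu$ inside $\nabla_{e_i}\chi$, whose Clifford trace $\sum_i e_i\cdot(\nabla_{e_i}\nu)\cdot=-H$ produces an equal and opposite $-H\chi$. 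What survives is exactly the $dN$-term $-\sum_{i,a,\beta}\langle dN(e_i),E_a\rangle\,\omega_{a\alpha\beta}\,e_i\cdot s_\beta$, whose pointwise operator norm is then estimated by $\|dN\|_{tr}$ via the spin-representation identity for $\omega$. So the structure of your argument is sound, but the two points above --- recognizing the estimate as an integrated anticommutator estimate, and correctly identifying the source of the $H$-cancellation --- must be fixed before the proof is complete.
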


Brendle  also proved the existence of the $m$-tuple of harmonic spinors with boundary condition $\chi s=s$.

\begin{proposition}[Brendle \cite{brendle2023matchingangle}]\label{prop:solvability}
    Suppose $\Omega\subset \mathbb{R}^n$ is a bounded covex smooth open domain, and $n\geq 3$ is an odd integer. Assume further that $N:\Sigma\to S^{n-1}$ is homotopic to the Euclidean Gauss map of $\Sigma=\partial\Omega$. Then the operator
    \[
    H^1(\Omega,\mathcal{E})\to L^2(\Omega,\mathcal{E})\bigoplus H^{\frac{1}{2}}(\Sigma,\mathcal{F}^-),\quad s\to (\mathcal{D}s,s-\chi s)
    \]
    is Fredholm and has Fredholm index at least $1$, where $\mathcal{F}^{-}:=\text{Ker}(\text{id}+\chi)$.
\end{proposition}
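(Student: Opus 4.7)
The plan is to treat $s-\chi s=0$ as a local elliptic boundary condition for $\mathcal{D}$, obtain the Fredholm property from the classical theory of elliptic boundary value problems, and extract the index bound by a homotopy to the flat model followed by an explicit kernel computation. First I would verify that $\chi$ is a pointwise self-adjoint bundle involution on $\mathcal{E}|_\Sigma$. The identity $\sum_\beta \omega_{a\alpha\beta}\hat{s}_\beta=E_a\cdot\hat{s}_\alpha$ together with $\sum_{a,b}N^aN^b E_a\cdot E_b\cdot=-|N|^2=-1$ and $\nu\cdot\nu=-1$ makes $\chi^2=\mathrm{id}$ a short computation, while self-adjointness follows from skew-Hermiticity of Clifford multiplication by unit vectors. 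Consequently $\mathcal{E}|_\Sigma=\mathcal{F}^+\oplus\mathcal{F}^-$ as orthogonal subbundles of equal rank, and the boundary condition reads $s|_\Sigma\in\mathcal{F}^+$.

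The ellipticity of the resulting boundary value problem in the Shapiro--Lopatinsky sense reduces, for a Dirac-type operator, to the assertion that $\nu\cdot\xi$ exchanges $\mathcal{F}^+$ and $\mathcal{F}^-$ for every nonzero $\xi\in T^*\Sigma$. This in turn follows from the pointwise anticommutation $\chi\,(\nu\cdot\xi\,\cdot\,)+(\nu\cdot\xi\,\cdot\,)\chi=0$, which I would check by a short Clifford calculation combining $\nu\cdot\nu=-1$ with $\xi\cdot\nu=-\nu\cdot\xi$ (since $\xi$ is tangent to $\Sigma$). Together with the involution property this realises $(\mathcal{D},\mathrm{id}-\chi)$ as a bona fide local elliptic boundary value problem, and the classical theory delivers Fredholmness of $s\mapsto(\mathcal{D}s,s-\chi s):H^1(\Omega,\mathcal{E})\to L^2(\Omega,\mathcal{E})\oplus H^{1/2}(\Sigma,\mathcal{F}^-)$ at once.

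Since the algebraic verifications above depend only on $|N|=1$ and on the Clifford relations, ellipticity and hence Fredholmness persist along any continuous deformation of $N$ and $g$. Using the homotopy from $N$ to the Euclidean Gauss map $N_0$ together with the linear interpolation $(1-t)g+tg_0$, homotopy invariance of the Fredholm index reduces the problem to computing the index on the model $(\Omega,g_0,N_0)$. There $R_{g_0}=0$ and convexity yields $H_0=\|dN_0\|_{tr}$, so the Weitzenböck-type inequality of the preceding proposition forces every kernel element to be parallel, hence a constant tuple. Rewriting the equation $\chi s=s$ for a constant tuple $(c_1,\dots,c_m)$ exactly says that the linear map $L:\Delta_n\to\Delta_n$ sending $\hat{s}_\alpha\mapsto c_\alpha$ commutes with Clifford multiplication by every unit vector; since $n$ is odd and the complex spin representation $\Delta_n$ is irreducible, Schur's lemma forces $L=\lambda\,\mathrm{id}$, so the kernel is one-dimensional and spanned by $(\hat{s}_1,\dots,\hat{s}_m)$. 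The analogous analysis of the formally adjoint boundary condition produces a map $L$ that \emph{anti}commutes with all Clifford multiplications, which for odd $n$ forces $L=0$ via the non-vanishing central action of the volume element, so the cokernel is trivial and $\mathrm{ind}\geq 1$. I expect the main obstacle to be the Shapiro--Lopatinsky verification in the second paragraph: once the anticommutation of $\chi$ with the tangential boundary symbol is in hand, the rest is classical elliptic theory combined with a finite-dimensional representation-theoretic calculation.
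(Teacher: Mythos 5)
The paper does not prove this proposition; it is quoted verbatim from Brendle's paper and invoked as a black box, so there is no in-house argument to compare against. That said, your outline does track the structure of Brendle's actual proof in \cite{brendle2023matchingangle}: show $\chi$ is a self-adjoint involution, verify Shapiro--Lopatinsky ellipticity via the anticommutation of $\chi$ with the tangential part of the principal symbol, invoke elliptic boundary value theory for Fredholmness, and then use homotopy invariance of the index to reduce to the flat model $(\Omega,g_0,N_0)$.

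Two points in the index computation need tightening. First, you should record that $\chi$ \emph{commutes} with Clifford multiplication by $\nu$ itself (only the tangential symbol $\nu\cdot\xi\cdot$ anticommutes); this is exactly what makes the adjoint boundary condition turn out to be $\chi t=-t$ rather than $\chi t = t$, which your cokernel analysis implicitly uses. Second, and more seriously: you justify ``kernel elements in the flat model are constant'' by the Weitzenb\"ock-type inequality of the preceding proposition, which works because $\chi s = s$ kills the boundary term $\langle \mathcal{D}^\Sigma s, s-\chi s\rangle$. But for the adjoint condition $\chi t = -t$ one has $t-\chi t = 2t$, so that boundary term becomes $2\operatorname{Re}\int_\Sigma\langle\mathcal{D}^\Sigma t,t\rangle$, which has no sign; the inequality as stated does \emph{not} force $\nabla t = 0$. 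So the step ``the analogous analysis of the formally adjoint boundary condition produces a map $L$ that anticommutes with all Clifford multiplications'' is unjustified as written: you have not established that cokernel elements of the flat model are parallel before applying the representation-theoretic Schur/volume-element dichotomy. This is where Brendle does additional work (a separate integration-by-parts identity adapted to the $\mathcal{F}^-$ condition) that your proposal skips. The final Schur argument for the kernel, and the volume-element argument ruling out anticommuting intertwiners for odd $n$, are both fine once constancy is in hand.
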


\begin{remark}
    The operator being Fredholm only requires that $\Omega$ is smooth and bounded, but to have the Fredholm index lower bound, one has to require that $\Omega$ is convex and $N$ is homotopic to the Euclidean Gauss map.
\end{remark}

\begin{lemma}\label{lem:comparison}
    Suppose $x\in \partial\Omega$ is a regular point of $\partial\Omega$, and $u_i(x)=0$, we specify $N_i:T\partial\Omega\cap \{u_i(x)=0\}\to S^{n-1}$ as the Euclidean Gauss map of the zero level set of $u_i$, then we have 
    \[
    \|dN_i(x)\|_{tr}\leq H_g(x).
    \]
\end{lemma}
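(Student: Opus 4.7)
The plan is to recognize $dN_i(x)$ as the Euclidean Weingarten map of the smooth hypersurface $\{u_i = 0\}$ at $x$ and then reduce the claim to a one-line submultiplicativity estimate for the trace norm under a change of metrics. Set $V := T_x\partial\Omega$. Since $x$ is a regular point, $\partial\Omega$ coincides with $\{u_i = 0\}$ near $x$, so $N_i(x)^{\perp_{g_0}} = V$ as subspaces of $\mathbb{R}^n$, and $dN_i(x)$ is an endomorphism of $V$. It is $g_0$-self-adjoint, with eigenvalues equal to the Euclidean principal curvatures $\kappa_1,\dots,\kappa_{n-1}$ of $\partial\Omega$ at $x$. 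Convexity of $\Omega$ forces $\kappa_k \geq 0$, so in the trace norm taken with $g_0$ on both the source and target,
\[
\|dN_i(x)\|_{tr,\,(g_0,g_0)} \;=\; \sum_{k=1}^{n-1}\kappa_k \;=\; H_0(x).
\]

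The quantity $\|dN_i(x)\|_{tr}$ that appears in the boundary term of Proposition \ref{prop:solvability} uses the metric $g$ on $T_x\partial\Omega$ and the round (Euclidean) metric on $T_{N_i(x)}S^{n-1}$. The idea is to factor $dN_i(x):(V,g)\to(V,g_0)$ as the composition $dN_i(x)\circ\mathrm{Id}$, where $\mathrm{Id}:(V,g)\to(V,g_0)$ is the identity map carrying $g$ on the source and $g_0$ on the target. The standard Schatten inequality $\|BA\|_{tr}\leq \|B\|_{tr}\,\|A\|_{op}$ then gives
\[
\|dN_i(x)\|_{tr,\,(g,g_0)} \;\leq\; \|dN_i(x)\|_{tr,\,(g_0,g_0)}\cdot \|\mathrm{Id}\|_{op,\,(g,g_0)} \;=\; H_0(x)\cdot\sup_{0\neq v\in V}\frac{|v|_{g_0}}{|v|_g}.
\]

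To finish, I would read the comparison hypothesis $H_g^2 g\geq H_0^2 g_0$ on $T\partial\Omega$ as a pointwise inequality of bilinear forms on $V$: it says exactly that $|v|_{g_0}^2/|v|_g^2 \leq H_g(x)^2/H_0(x)^2$ for every $v\in V\setminus\{0\}$ when $H_0(x) > 0$, so the supremum above is bounded by $H_g(x)/H_0(x)$, and substitution yields $\|dN_i(x)\|_{tr}\leq H_g(x)$. The degenerate case $H_0(x) = 0$ is immediate from convexity, which forces every $\kappa_k$ to vanish, hence $dN_i(x)=0$.

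The one mildly delicate point is keeping straight the three pairs of inner products that the trace norm is taken with respect to, and confirming that the Schatten inequality one invokes is indeed the one matching Brendle's convention for $\|dN\|_{tr}$ in Proposition \ref{prop:solvability}. There is no substantive geometric obstacle, because the lemma is entirely local at a smooth boundary point; the polytope structure (edges and vertices) plays no role and only re-enters in Section \ref{sec:morrey norm} when building the approximating family of smooth surfaces.
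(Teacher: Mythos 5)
Your proof is correct and takes essentially the same route as the paper: bound the trace norm of $dN_i$ (source metric $g$, target metric $g_r$) by the Euclidean trace norm times the distortion of the identity map $(V,g)\to(V,g_0)$, then identify the Euclidean trace norm with $H_0$ via convexity and bound the distortion by $H_g/H_0$ via the comparison hypothesis. The only difference is presentational: the paper carries out the Schatten-type inequality $\|BA\|_{tr}\leq\|B\|_{tr}\,\|A\|_{op}$ explicitly in orthonormal bases (through the orthogonal matrix $C_{l,j}$ and the estimate $g_0(e_l,\bar e_l)\leq|\bar e_l|_{g_0}$), whereas you invoke it as a known fact; you also make the role of convexity ($\kappa_k\geq 0$, hence $\sum\kappa_k=H_0$) and the degenerate case $H_0=0$ explicit, both of which the paper leaves implicit.
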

\begin{proof}
    Suppose $\tilde{\lambda}_j$ are the singular values of $dN_i$ w.r.t. the $g$ metric, then 
    \[
    dN_i(\tilde{e}_j)=\tilde{\lambda}_jE_j,\quad \j\in\{1,2,\dots,n-1\}
    \]
    where $\{\tilde{e}_j\}_{j=1}^{n-1}$ is an o.n. basis of $T_x\partial\Omega$ w.r.t. $g$, $\{E_j\}_{j=1}^{n-1}$ is an o.n. basis of $TS^{n-1}$ w.r.t. the round metric $g_{r}$.

    Suppose $\lambda_j$ are non-negative singular values of $dN_i$ w.r.t. the $g_0$, then
    \[
    dN_i(e_j)=\lambda_j F_j,\quad j\in\{1,2,\dots, n-1\}
    \]
    where $\{e_j\}_{j=1}^{n-1}$ is an o.n. basis of $T_x\partial\Omega$ w.r.t. $g$, $\{F_j\}_{j=1}^{n-1}$ is an o.n. basis of $TS^{n-1}$ w.r.t. the round metric $g_{r}$.

    By deinifion, we compute 
    \begin{align*}
        \|dN_i(x)\|_{tr}=&\sum_{j=1}^{n-1}\tilde{\lambda}_j=\sum_{j=1}^{n-1}g_r(dN_i(\tilde{e}_j),E_j)\\
                        =&\sum_{j=1,l=1}^{n-1}g_0(e_l,\tilde{e}_j)g_r(dN_i(e_l),E_j)=\sum_{j,l=1}^{n-1}\lambda_lg_0(e_l,\tilde{e}_j)g_r(F_l,E_j)\\
                        =&\sum_{j,l=1}^{n-1}\lambda_lg_0(e_l,C_{l,j}\tilde{e}_j)=\sum_{l=1}^{n-1}\lambda_lg_0(e_l,\bar{e}_j)\\
                        \leq& \sum_{l=1}^{n-1}\lambda_l|\bar{e}_l|_{g_0}.
    \end{align*}
    Where $(C_{l,j})_{(n-1)\times (n-1)}\in O(n-1)$, $\{\bar{e}_j\}_{j=1}^{n-1}=\{C_{j,l}\tilde{e}_l\}_{j=1}^{n-1}$ is another o.n. basis of $T_x\partial\Omega$ w.r.t. the metric $g$.
    
    Since 
    \[
    H_g^2g\geq H_{0}^2g_0,
    \]
    we have that
    \[
    |\bar{e}_l|_{g_0}\leq \frac{H_g}{H_{g_0}},
    \]
    thus we obtain
    \begin{align*}
        \|dN_i(x)\|_{tr}&\leq \sum_{l=1}^{n-1}\lambda_l\frac{H_g}{H_{g_0}}\\
                        &=H_g.
    \end{align*}
\end{proof}

%\begin{remark}
    %We do not know whether the (mean curvature)-metric comparison is the optimal boundary condition for the rigidity theorem.
%\end{remark}

\begin{corollary}
    Suppose $\Omega$ is a convex domain in $\mathbb{R}^n$, where $n\geq 3$ is an odd integer. Equip $\Omega$ with a Riemannian metric $g$ such that $R_g\geq 0$ for any $x\in\Omega$, $H_g^2(x)g\geq H_0^2(x)g_0$ for any $x\in\partial\Omega$, then $g$ must be Euclidean.
\end{corollary}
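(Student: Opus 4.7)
The plan is to apply Brendle's spinor argument directly to $\Omega$, observing that a smooth convex domain $\Omega\subset\mathbb{R}^n$ is a Riemannian polytope type domain with $|I|=1$, so every boundary point is regular and the matching angle hypothesis is vacuously satisfied. Thus the corollary should follow formally from the same Bochner identity plus boundary-condition construction as in the polytope setting, but the analysis is much cleaner because no corner/edge approximation is needed.

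First, I would choose $N:\Sigma\to S^{n-1}$ to be the Euclidean outward Gauss map of $\partial\Omega$, which is trivially homotopic to itself. By Proposition \ref{prop:solvability}, the operator $s\mapsto(\mathcal{D}s,s-\chi s)$ is Fredholm with index at least $1$, so there exists a nontrivial $m$-tuple $s=(s_1,\dots,s_m)\in H^1(\Omega,\mathcal{E})$ satisfying $\mathcal{D}s=0$ on $\Omega$ and $\chi s=s$ on $\Sigma$.

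Second, I would plug this $s$ into the integral inequality from the Bochner proposition. The boundary Dirac pairings vanish because $s-\chi s\equiv 0$, leaving
\[
\int_{\Omega}\sum_{\alpha=1}^m|\nabla s_\alpha|^2\,dV_g+\frac{1}{4}\int_{\Omega}R_g\sum_{\alpha=1}^m|s_\alpha|^2\,dV_g+\frac{1}{2}\int_{\Sigma}(H_g-\|dN\|_{tr})\sum_{\alpha=1}^m|s_\alpha|^2\,da_g\leq 0.
\]
By Lemma \ref{lem:comparison}, applied pointwise on $\Sigma$ (there is only one defining function, so every boundary point is regular), $H_g\geq\|dN\|_{tr}$; combined with $R_g\geq 0$, every term on the left-hand side is nonnegative. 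Hence each one must vanish: each $s_\alpha$ is parallel on $\Omega$, $R_g\equiv 0$, and $H_g\equiv\|dN\|_{tr}$ on $\Sigma$.

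Finally, I would upgrade ``parallel nontrivial $m$-tuple'' to ``full basis of parallel spinors'' in order to conclude flatness rather than just Ricci-flatness. Once $\mathcal{S}$ is trivialized by parallel sections, the holonomy of $g$ on $\Omega$ is trivial, so $g$ is flat; since $\Omega$ is simply connected, this forces $g$ to be Euclidean. The main obstacle is precisely this last step: the index bound only produces one parallel tuple, while in dimension $n\geq 4$ one needs $m$ linearly independent parallel spinors to force vanishing curvature. The natural fix is to rerun Brendle's index calculation for each target spinor $\hat{s}_\alpha\in\Delta_n$ (exploiting the direct-sum structure $\mathcal{E}=\bigoplus_m\mathcal{S}$ and the compatibility of $\chi$ with this decomposition) to extract $m$ independent parallel tuples; equivalently, the corollary can be deduced as the $|I|=1$ case of the main theorem, whose proof supplies this ingredient via its Morrey-norm approximation.
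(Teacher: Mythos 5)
Your outline reproduces the paper's intended argument: realize $\Omega$ as a Riemannian polytope type domain with $|I|=1$ so that every boundary point is regular and the matching angle hypothesis is vacuous, invoke Proposition~\ref{prop:solvability} to produce a nontrivial $m$-tuple $s$ with $\mathcal{D}s=0$ and $\chi s = s$, plug it into Brendle's integral inequality, use Lemma~\ref{lem:comparison} to make $H_g-\|dN\|_{tr}\geq 0$ pointwise on $\Sigma$, and conclude that each $s_\alpha$ is parallel. You also correctly flag the remaining subtlety: a single nontrivial parallel $m$-tuple gives Ricci-flatness, but for $n\geq 4$ one needs a full parallel trivialization of $\mathcal{S}$ to conclude flatness.

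Your first proposed fix for that step, however, does not work. The boundary operator $\chi$ is \emph{not} compatible with the decomposition $\mathcal{E}=\bigoplus_m\mathcal{S}$: from $(\chi s)_{\alpha}=-\sum_{a,\beta}\langle N,E_a\rangle\,\omega_{a\alpha\beta}\,\nu\cdot s_{\beta}$, the $\alpha$-th component of $\chi s$ genuinely mixes all $s_\beta$, so one cannot decouple the Fredholm problem into $m$ scalar copies, and re-running the index bound ``for each $\hat{s}_\alpha$'' does not yield $m$ separate parallel tuples. The mechanism the paper actually relies on (via its citation of Brendle's Section~4) is different: for a parallel $m$-tuple the Gram matrix $\langle s_\alpha, s_\beta\rangle$ is constant, the boundary condition $\chi s = s$ together with $\chi^2=\mathrm{id}$ imposes algebraic constraints on this Gram matrix involving the varying vector $N(x)$, and the fact that the Euclidean Gauss map of a bounded convex domain is surjective onto $S^{n-1}$ forces the Gram matrix to be a positive multiple of the identity. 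Hence the $s_\alpha$ are everywhere orthogonal and nonvanishing, $\mathcal{S}$ is parallel-trivialized, the holonomy is trivial, and $g$ is flat; simple connectivity of $\Omega$ then gives an isometry with a Euclidean domain. Your second fallback (treat the statement as the $|I|=1$ case of the main theorem) is logically fine but inverts the paper's order, since the corollary appears in Section~\ref{sec:pre} precisely to dispose of the smooth case before the Morrey estimates are developed.
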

\begin{remark}
    For the general dimension $n\geq 3$, we need to prove the \textit{Riemannian polytope} version first.
\end{remark}

\section{Morrey norm estimates}\label{sec:morrey norm}
The main difficulty of Morrey norm estimates in our setting compared with the standard convex polytope is that the unit normal vector of each {\em face} of the {\em Riemannian polytope type domain} is changing in the Euclidean space.

The key ingredients of the Morrey norm estimates are dividing the smooth-out approximation surface into 3 parts: near face(co-dim $1$) parts, near edge(co-dim $2$) parts and near vertex(co-dim $3$) parts. Both near edge parts and near vertex parts are dominated by the angle assumption, only the near face parts are dominated by the mean curvature-metric comparison. Therefore, in our setting, we only need to focus on the near face part estimates and area estimates of other parts.

\begin{comment}
\subsection{Matching angle hypothesis}
We first obtain the Morrey norm estimate under the {\em matching angle hypothesis}.
\end{comment}

Since $\Omega$ has a non-empty interior, there exists a positive number $\lambda_0$, such that for $\lambda\geq\lambda_0$,
\[
\Omega_{\lambda}:=\{x\in\mathbb R^n: \sum_{i=1}^ke^{\lambda u_i(x)}\leq 1\}\neq\emptyset.
\]
We denote $\partial\Omega_{\lambda}$ as $\Sigma_{\lambda}$. For each $\lambda\geq\lambda_0$, 
$\Omega_{\lambda}$ is a compact convex domain with smooth boundary $\Sigma_{\lambda}$.

\begin{lemma}
    If $\lambda$ is sufficiently large, then 
    \[
    \inf_{\Sigma_{\lambda}}\left\vert \sum_{i=1}^ke^{\lambda u_i}du_i\right\vert\geq C_1^{-1},
    \]
    and
    \[
    \inf_{\Sigma_{\lambda}}\left\vert\sum_{i=1}^ke^{\lambda u_i}|\nabla u_i|N_i\right\vert_{g_0}\geq C_2^{-1},
    \]
    where $C_1,C_2,$ are positive constants independent of $\lambda$.
\end{lemma}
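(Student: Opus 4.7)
The plan is to reduce the two inequalities to the same estimate and then argue by contradiction plus compactness. Under the Euclidean musical isomorphism, $du_i$ corresponds to $\nabla u_i$ and $\nabla u_i = |\nabla u_i| N_i$, so $\sum_i e^{\lambda u_i} du_i$ and $\sum_i e^{\lambda u_i} |\nabla u_i| N_i$ are identified and have equal $g_0$-norms. It therefore suffices to prove a uniform positive lower bound on
\[
V_\lambda(x) := \Big|\sum_{i=1}^k e^{\lambda u_i(x)}\, \nabla u_i(x)\Big|_{g_0}
\]
for $x \in \Sigma_\lambda$ and all sufficiently large $\lambda$.

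The first step is to observe that $\Omega_\lambda \subset \bar\Omega$: if $\sum_j e^{\lambda u_j(x)} \le 1$ then $e^{\lambda u_i(x)} \le 1$ for every $i$, so $u_i(x) \le 0$. In particular $\Sigma_\lambda$ sits inside the compact set $\bar\Omega$. Suppose, for contradiction, that no uniform positive lower bound holds; then there exist $\lambda_n \to \infty$ and $x_n \in \Sigma_{\lambda_n}$ with $V_{\lambda_n}(x_n) \to 0$. Set $p_i^n := e^{\lambda_n u_i(x_n)}$, so $p_i^n \ge 0$ and $\sum_i p_i^n = 1$ on $\Sigma_{\lambda_n}$. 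Passing to subsequences, $x_n \to x_\infty \in \bar\Omega$ and $p_i^n \to p_i^\infty$ with $\sum_i p_i^\infty = 1$. For any $i$ with $u_i(x_\infty) < 0$ one has $\lambda_n u_i(x_n) \to -\infty$, hence $p_i^\infty = 0$; so $p^\infty$ is supported in the active set $I := \{i : u_i(x_\infty) = 0\}$, which is nonempty (and in particular $x_\infty \in \partial\Omega$). Continuity of $\nabla u_i$ together with $V_{\lambda_n}(x_n) \to 0$ gives, in the limit,
\[
\sum_{i \in I} p_i^\infty \nabla u_i(x_\infty) = 0, \qquad p_i^\infty \ge 0, \qquad \sum_{i \in I} p_i^\infty = 1.
\]

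The core of the argument is to exclude this identity using convexity; this is a Slater / Mangasarian--Fromovitz type statement for the constraint system $\{u_i \le 0\}$. The idea is to fix $y_0 \in \operatorname{int}(\Omega)$ and set $v_0 := y_0 - x_\infty$. Convexity of $\Omega$ together with non-emptiness of its interior places $v_0$ in the interior of the tangent cone $T_{x_\infty}\Omega$. For any $v \in T_{x_\infty}\Omega$ and any $i \in I$, a first-order expansion yields $\langle \nabla u_i(x_\infty), v\rangle \le 0$. If equality held for some $v$ in the interior of the cone, then the perturbations $v + w$ for $|w|$ small would still lie in the cone, forcing $\langle \nabla u_i(x_\infty), w \rangle \le 0$ on a ball around $0$; the symmetry $w \mapsto -w$ then gives $\nabla u_i(x_\infty) = 0$, contradicting the regular-value hypothesis on $u_i$. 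Hence $\langle \nabla u_i(x_\infty), v_0 \rangle < 0$ for every $i \in I$, and since $I$ is finite we may pick $\delta > 0$ with $\langle \nabla u_i(x_\infty), v_0 \rangle \le -\delta$ for all $i \in I$. Pairing the limit identity with $v_0$ then gives $0 \le -\delta$, a contradiction.

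The main obstacle is precisely this last convex-geometric step: ruling out a nontrivial positive linear dependence among the outer normals of the faces active at a singular boundary point of $\partial\Omega$. Both the convexity of $\Omega$ and the non-emptiness of $\operatorname{int}(\Omega)$ are essential in producing the separating vector $v_0$; without either hypothesis the limiting identity could genuinely occur and no uniform lower bound would hold.
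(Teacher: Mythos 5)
Your argument is correct and is essentially the argument that the paper invokes by deferring to Brendle's Lemma 3.5 and Lemma 3.6: a contradiction-plus-compactness argument on the probability simplex of coefficients $e^{\lambda u_i}$, together with the convex-geometric (Slater/MFCQ) fact that the gradients of the active constraints at a boundary point of a convex domain with nonempty interior admit no nontrivial vanishing nonnegative combination. One small caveat in exposition: the paper's convention is that $|\cdot|$ without subscript is the $g$-norm, so the first quantity is measured in $g$ and the second in $g_0$; your reduction to a single $g_0$-estimate is still valid because $g$ and $g_0$ are uniformly equivalent on the compact set $\bar\Omega$, but this should be said rather than treated as an identification.
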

\begin{proof}
    The proof is exactly the same as \textbf{Lemma 3.5} and \textbf{Lemma 3.6} in Brendle \cite{brendle2023matchingangle}. 
\end{proof}
\begin{remark}
    Brendle's proof of \text{Lemma 3.5, 3.6} does not use the fact that $u_i$ is a linear function, therefore, the same results follow in our setting.
\end{remark}

The unit normal vector field of $\Sigma_{\lambda}$ w.r.t. $g$ is
\[
\nu(x):=\frac{\sum_{i=1}^ke^{\lambda u_i}|\nabla u_i|\nu_i}{\left\vert\sum_{i=1}^ke^{\lambda u_i}|\nabla u_i|\nu_i\right\vert},
\]
and a map $N:\Sigma_{\lambda}\to S^{n-1}$ 
\[
N(x):=\frac{\sum_{i=1}^ke^{\lambda u_i}|\nabla u_i|N_i}{\left\vert\sum_{i=1}^ke^{\lambda u_i}|\nabla u_i|N_i\right\vert_{g_0}}.
\]
We use $\vert\cdot\vert$ to denote the norm w.r.t. the $g$ metric, $\vert\cdot\vert_{g_0}$ to denote the norm w.r.t. the Euclidean metric.

\begin{proposition}
    For any $x\in \Sigma_{\lambda}$, we have that
    \[
    H-\|dN\|_{tr}\geq V_{\lambda},
    \]
    where $V_{\lambda}:\Sigma_x\to\mathbb R$.
    \begin{align*}
    V_{\lambda}&=\lambda\frac{\sum_{i=1}^ke^{\lambda u_i}|\nabla u_i|^2|\pi(\nu_i)|^2}{\left\vert\sum_{i=1}^ke^{\lambda u_i}|\nabla u_i|\nu_i\right\vert}+\frac{\sum_{i=1}^ke^{\lambda u_i}\left(\Delta u_i-\nabla^2u_i(\nu,\nu)\right)}{\left\vert\sum_{i=1}^ke^{\lambda u_i}|\nabla u_i|\nu_i\right\vert}\\
    &-\lambda\frac{\sum_{i=1}^ke^{\lambda u_i}|\nabla u_i|^2|\pi(\nu_i)||P(N_i)|}{\left\vert\sum_{i=1}^ke^{\lambda u_i}|\nabla u_i|N_i\right\vert_{g_0}}-\frac{\sum_{i=1}^ke^{\lambda u_i}|\nabla|\nabla u_i||P(N_i)|}{\left\vert\sum_{i=1}^ke^{\lambda u_i}|\nabla u_i|N_i\right\vert_{g_0}}\\
    &-\frac{\sum_{i=1}^ke^{\lambda u_i}|\nabla u_i|\|dN_i\|_{tr}}{\left\vert\sum_{i=1}^ke^{\lambda u_i}|\nabla u_i|N_i\right\vert_{g_0}}.
    \end{align*}
    $\pi:T_x\Omega_{\lambda}\to T_x\Sigma_{\lambda}$ is the orthogonal projection w.r.t. metric $g$, $P:T_x\Omega_{\lambda}\to T_x\Sigma_{\lambda}$ is the orthogonal projection w.r.t. Euclidean metric.
\end{proposition}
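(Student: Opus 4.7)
The inequality will follow by matching terms: $H$ equals the first two (positive) summands of $V_\lambda$ exactly, while $\|dN\|_{tr}$ is bounded above by the sum of the remaining three (which then enter $V_\lambda$ with negative sign). For the mean curvature, I would set $F := \sum_{i=1}^{k} e^{\lambda u_i}$, so that $\Sigma_\lambda = \{F = 1\}$ and $\nu = \nabla^g F/|\nabla^g F|_g$. The standard level-set formula $H = (\Delta^g F - \nabla^2 F(\nu,\nu))/|\nabla^g F|_g$ combined with the Leibniz rule gives $|\nabla^g F|_g = \lambda A$ and numerator equal to $\lambda^2 \sum_i e^{\lambda u_i}|\nabla u_i|^2(1 - g(\nu_i,\nu)^2) + \lambda\sum_i e^{\lambda u_i}(\Delta u_i - \nabla^2 u_i(\nu,\nu))$. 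Using the identity $1 - g(\nu_i,\nu)^2 = |\pi(\nu_i)|^2$ and dividing by $\lambda A$ then reproduces the first two terms of $V_\lambda$ on the nose.

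\noindent\textbf{Bounding $\|dN\|_{tr}$.} Writing $N = Y/B$ with $Y := \sum_i e^{\lambda u_i}|\nabla u_i|N_i$ and $B := |Y|_{g_0}$, the constraint $|N|_{g_0}\equiv 1$ yields $dN = (1/B)\,P_{N^\perp}\,dY$, where $P_{N^\perp}$ is the Euclidean projection onto $N^\perp$. The Leibniz rule then decomposes $dY$ into three pieces: $\lambda\sum_i e^{\lambda u_i}|\nabla u_i|\,(du_i\otimes N_i)$, $\sum_i e^{\lambda u_i}\,(d|\nabla u_i|\otimes N_i)$, and $\sum_i e^{\lambda u_i}|\nabla u_i|\,dN_i$. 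For the first two pieces, subadditivity of the trace norm together with the rank-one identity $\|\alpha\otimes v\|_{tr} = |\alpha|_g\,|v|_{g_0}$ gives the required bounds, using that the $g$-norm of $du_i$ restricted to $T\Sigma_\lambda$ is $|\nabla u_i|\cdot|\pi(\nu_i)|$ and that the image contribution after projection is controlled by $|P(N_i)|$. The third piece is dominated by $\sum_i e^{\lambda u_i}|\nabla u_i|\|dN_i\|_{tr}$, since restricting a linear map to a subspace does not increase its trace norm (and Lemma \ref{lem:comparison} controls $\|dN_i\|_{tr}$ by $H_g$). Dividing by $B$ reproduces the last three terms of $V_\lambda$; subtracting the resulting bound from $H$ yields $V_\lambda$.

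\noindent\textbf{Main obstacle.} The principal technical point is the rank-one trace-norm bookkeeping above—in particular, the careful interplay between the $g$-metric on the domain $T\Sigma_\lambda$ and the Euclidean $g_0$ on the target $\mathbb{R}^n$, and the need to reconcile the image-side projection $P_{N^\perp}$ that arises from normalizing $N = Y/B$ with the tangential projection $P$ appearing in $V_\lambda$ (the two coincide only when $N$ agrees with the Euclidean unit normal to $\Sigma_\lambda$, and the discrepancy must be absorbed via the explicit weighted-average formula for $N$). Apart from this, the argument is a direct if somewhat tedious application of the level-set formula for mean curvature, the Leibniz rule, and subadditivity of the trace norm.
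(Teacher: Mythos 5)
Your plan tracks the paper's proof essentially line for line: the paper likewise computes $H$ as the tangential divergence of $\nu = \nabla^g F/|\nabla^g F|_g$ (the level-set formula you invoke is the same identity), then expands $dN(\zeta)$ by the Leibniz rule into a weight-derivative piece, a $d|\nabla u_i|$ piece, and a $dN_i$ piece, bounds $\|dN\|_{tr}$ term by term using subadditivity and the rank-one formula $\|\alpha\otimes v\|_{tr}=|\alpha|_g|v|_{g_0}$, and subtracts. The identity $|\pi(\nu_i)|^2 = 1-g(\nu_i,\nu)^2$ and the bound $\bigl|du_i|_{T\Sigma_\lambda}\bigr|_g = |\nabla u_i|\,|\pi(\nu_i)|$ are exactly what the paper uses.

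The one place you register a difficulty — reconciling the projection $P_{N^\perp}$ produced by differentiating $N=Y/|Y|_{g_0}$ with the projection $P$ (onto $T_x\Sigma_\lambda$, $g_0$-orthogonally) that appears in the stated $V_\lambda$ — is in fact a real subtlety, and you have diagnosed it correctly. Since the weights in $N$ are $|\nabla^g u_i|_g$ rather than $|\nabla^{g_0}u_i|_{g_0}$, the map $N$ is \emph{not} the Euclidean Gauss map of $\Sigma_\lambda$, so $N^\perp$ and $T_x\Sigma_\lambda$ are genuinely different hyperplanes, and the inequality $|P_{N^\perp}(N_i)|\le |P(N_i)|$ that would be needed to pass from your bound to the stated $V_\lambda$ does not hold in general. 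However, you should note that the paper's own proof simply writes $P$ in the formula for $dN(\zeta)$ without justification — it does not confront the point you raise at all. The cleanest reading is that the $P$ in the proposition is meant to be $P_{N^\perp}$ (i.e., projection onto $T_{N(x)}S^{n-1}$), and the displayed description of $P$ as projection onto $T_x\Sigma_\lambda$ is an imprecision carried over from the linear case in \cite{brendle2023matchingangle}, where the two hyperplanes coincide. Under that reading your derivation is rigorous; you should therefore treat your observation as a needed correction to the statement rather than as an obstacle in the argument, and not claim that the discrepancy is ``absorbed'' — it is not absorbed, it is resolved by replacing $P$ with $P_{N^\perp}$ throughout (which has no effect on the downstream Morrey estimates, since there $N\approx N_{i_1}$ and the Euclidean normal of $\Sigma_\lambda$ is also $\approx N_{i_1}$, so the two projections are exponentially close anyway).
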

\begin{proof}
    The mean curvature of $\Sigma_{\lambda}$ w.r.t the $g$ metric by definition is
    \begin{align*}
        H&=\nabla^{\Sigma_{\lambda}}\cdot\left(\frac{\sum_{i=1}^ke^{\lambda u_i}|\nabla u_i|\nu_i}{\left\vert\sum_{i=1}^ke^{\lambda u_i}|\nabla u_i|\nu_i\right\vert}\right)\\
        &=\lambda\frac{\sum_{m=1}^{n-1}\sum_{i=1}^ke^{\lambda u_i}\langle \nabla u_i,e_m\rangle^2}{\left\vert\sum_{i=1}^ke^{\lambda u_i}|\nabla u_i|\nu_i\right\vert}+\frac{\sum_{i=1}^ke^{\lambda u_i}\left(\Delta u_i-\nabla^2u_i(\nu,\nu)\right)}{\left\vert\sum_{i=1}^ke^{\lambda u_i}|\nabla u_i|\nu_i\right\vert}\\
        &=\lambda\frac{\sum_{i=1}^ke^{\lambda u_i}|\nabla u_i|^2|\pi(\nu_i)|^2}{\left\vert\sum_{i=1}^ke^{\lambda u_i}|\nabla u_i|\nu_i\right\vert}+\frac{\sum_{i=1}^ke^{\lambda u_i}\left(\Delta u_i-\nabla^2u_i(\nu,\nu)\right)}{\left\vert\sum_{i=1}^ke^{\lambda u_i}|\nabla u_i|\nu_i\right\vert}.
    \end{align*}

    For any $\zeta\in T_x\Sigma_{\lambda}$, we have that
    \begin{align*}
        dN(\zeta)&=\lambda \frac{\sum_{i=1}^ke^{\lambda u_i}|\nabla u_i|\langle \nabla u_i,\zeta\rangle P(N_i)}{\left\vert\sum_{i=1}^ke^{\lambda u_i}|\nabla u_i|N_i\right\vert}+\frac{\sum_{i=1}^ke^{\lambda u_i}\langle \nabla |\nabla u_i|,\zeta\rangle P(N_i)}{\left\vert\sum_{i=1}^ke^{\lambda u_i}|\nabla u_i|N_i\right\vert}\\
        &+\frac{\sum_{i=1}^ke^{\lambda u_i}|\nabla u_i|P(dN_i(\zeta))}{\left\vert\sum_{i=1}^ke^{\lambda u_i}|\nabla u_i|N_i\right\vert}.
    \end{align*}
    Since the trace norm satisfies the triangle inequality, we have that
    \begin{align*}
        \|dN\|_{tr}&\leq \lambda\frac{\sum_{i=1}^ke^{\lambda u_i}|\nabla u_i|^2|\pi(\nu_i)||P(N_i)|}{\left\vert\sum_{i=1}^ke^{\lambda u_i}|\nabla u_i|N_i\right\vert_{g_0}}+\frac{\sum_{i=1}^ke^{\lambda u_i}|\nabla|\nabla u_i||P(N_i)|}{\left\vert\sum_{i=1}^ke^{\lambda u_i}|\nabla u_i|N_i\right\vert_{g_0}}\\
        &+\frac{\sum_{i=1}^ke^{\lambda u_i}|\nabla u_i|\|dN_i\|_{tr}}{\left\vert\sum_{i=1}^ke^{\lambda u_i}|\nabla u_i|N_i\right\vert_{g_0}}.
    \end{align*}

    Then the desired inequality follows by subtracting these two inequalities.
\end{proof}

\begin{lemma}\label{lem:oestimate}
As $\lambda\to\infty$, we have
    \[
    \sup_{\Sigma_{\lambda}}\max\{-V_{\lambda},0\}\leq o(\lambda). 
    \]   
\end{lemma}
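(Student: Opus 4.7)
The plan is to decompose $V_\lambda$ into its $\lambda$-order part and a bounded remainder, and then establish that the $\lambda$-order part is non-negative modulo $o(1)$ by a case analysis on which of the weights $e^{\lambda u_i(x)}$ are appreciable at a given $x \in \Sigma_\lambda$.

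Setting $a_i := e^{\lambda u_i}|\nabla u_i|$ and introducing
\begin{align*}
A_\lambda &:= \frac{\sum_{i} e^{\lambda u_i}|\nabla u_i|^2\,|\pi(\nu_i)|^2}{|\sum_i a_i\nu_i|_g}, \\
B_\lambda &:= \frac{\sum_{i} e^{\lambda u_i}|\nabla u_i|^2\,|\pi(\nu_i)|\,|P(N_i)|}{|\sum_i a_i N_i|_{g_0}},
\end{align*}
one has $V_\lambda = \lambda(A_\lambda - B_\lambda) + W_\lambda$, where $W_\lambda$ collects the remaining non-$\lambda$ contributions built from $\Delta u_i - \nabla^2 u_i(\nu,\nu)$, $\nabla|\nabla u_i|$, and $\|dN_i\|_{tr}$. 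The smoothness of the $u_i$ on the compact set $\Omega$, the uniform positive lower bounds on $|\sum_i a_i\nu_i|_g$ and $|\sum_i a_i N_i|_{g_0}$ supplied by the previous lemma, and the constraint $\sum_i e^{\lambda u_i} = 1$ on $\Sigma_\lambda$ together imply that $|W_\lambda|$ is uniformly bounded in $\lambda$. Hence the lemma reduces to proving $A_\lambda - B_\lambda \ge -o(1)$ pointwise.

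I would next partition $\Sigma_\lambda$ by the cardinality of the active set $J(x) := \{i : e^{\lambda u_i(x)} \ge \lambda^{-1}\}$. Indices outside $J$ contribute $O(\lambda^{-1})$ to both $A_\lambda$ and $B_\lambda$ and can be discarded. In the near-face case $|J|=1$, the vectors $\nu$ and $N$ align respectively with $\nu_{i_0}$ and $N_{i_0}$, forcing $|\pi(\nu_{i_0})|, |P(N_{i_0})| \to 0$ and thus $A_\lambda, B_\lambda \to 0$; as a consistency check, the dominant part of $W_\lambda$ in this regime is $H_g - \|dN_{i_0}\|_{tr}$, which is already non-negative by Lemma \ref{lem:comparison}. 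In the near-edge or near-vertex case $|J| \ge 2$, every $i \in J$ satisfies $|u_i(x)| \le \lambda^{-1}\log\lambda$, so by transversality of the $\{u_i = 0\}$ at the singular stratum the point $x$ lies within $O(\lambda^{-1}\log\lambda)$ of $\bigcap_{i\in J}\{u_i = 0\}$. Applying the matching angle hypothesis at a nearest singular point together with the smoothness of the vector fields $\nu_i, N_i$ yields
\[
g(\nu_i,\nu_j)(x) = g_0(N_i,N_j)(x) + O(\lambda^{-1}\log\lambda) \qquad \text{for all } i,j \in J.
\]
Expanding $|\sum_{i\in J} a_i\nu_i|_g^2$ and $|\sum_{i\in J} a_i N_i|_{g_0}^2$ as bilinear forms in the weights $a_i$ shows the two denominators agree up to the same order, and an identical argument gives $g(\nu_i,\nu) = g_0(N_i,N) + o(1)$, hence $|\pi(\nu_i)| = |P(N_i)| + o(1)$. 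Substituting these identities into the definitions of $A_\lambda$ and $B_\lambda$ produces an exact pointwise cancellation up to $o(1)$, i.e. $A_\lambda - B_\lambda = o(1)$, which combined with the boundedness of $W_\lambda$ yields $V_\lambda \ge -o(\lambda)$.

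The principal obstacle I foresee is obtaining a quantitative version of the Gram-matrix matching that is uniform over all $|J| \ge 2$ and that remains stable when the magnitudes $a_i$ are widely disparate among the active indices. This should be attainable because all bilinear forms in question are homogeneous of degree zero in the weights, so only the ratios $a_i/a_j$ together with the smoothness of $g(\nu_i,\nu_j) - g_0(N_i,N_j)$ near the singular stratum enter the final estimate; nevertheless one must carefully handle transition regions where an index is on the verge of leaving or joining $J$, where the threshold $\lambda^{-1}$ in the definition of $J$ affords just enough slack to absorb such boundary effects into the $o(1)$ error.
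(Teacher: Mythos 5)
Your proposal is correct and follows essentially the same route as the paper, which in turn defers to Brendle's Proposition 3.10: split $V_\lambda$ into the $\lambda$-order piece $\lambda(A_\lambda - B_\lambda)$ and a bounded remainder $W_\lambda$ (this is exactly the point made in the paper's remark following the lemma), then handle the $\lambda$-order piece by an active-set case analysis in which the near-face regime $|J|=1$ gives $A_\lambda, B_\lambda = O(\lambda^{-1})$ automatically and the near-edge/vertex regime $|J|\ge 2$ uses the matching angle hypothesis at a nearby singular point to make the two Gram-matrix bilinear forms agree to $O(\lambda^{-1}\log\lambda)$. One small imprecision: your ``consistency check'' that $W_\lambda$ is dominated by $H_g - \|dN_{i_0}\|_{tr}\ge 0$ via Lemma \ref{lem:comparison} is not quite right as stated, since that lemma applies at points of $\partial\Omega$ rather than on $\Sigma_\lambda$ (the paper's later face-regime lemma handles this by passing to a nearby boundary point $y$), but this aside is not load-bearing for your argument, which only needs $|W_\lambda|\le C$.
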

\begin{proof}
    The proof is the same as \textbf{Proposition 3.10} in Brendle \cite{brendle2023matchingangle}.
\end{proof}
\begin{remark}
    To prove Lemma \ref{lem:oestimate}, we only need to estimate the following two terms
    \[
    \lambda\frac{\sum_{i=1}^ke^{\lambda u_i}|\nabla u_i|^2|\pi(\nu_i)|^2}{\left\vert\sum_{i=1}^ke^{\lambda u_i}|\nabla u_i|\nu_i\right\vert}-\lambda\frac{\sum_{i=1}^ke^{\lambda u_i}|\nabla u_i|^2|\pi(\nu_i)||P(N_i)|}{\left\vert\sum_{i=1}^ke^{\lambda u_i}|\nabla u_i|N_i\right\vert_{g_0}}
    \]
    since other terms are bounded by a constant independent of $\lambda$. 
\end{remark}

When $k=1$, the results is obviously true by Lemma \ref{lem:comparison}.

We first consider the case when $k\geq 3$.

\begin{definition}
    If $k\geq 3$, for three distinct elements $i_1,i_2,i_3\in I=\{1,2,\dots,k\}$, we denote $G_{\lambda}^{(i_1,i_2,i_3)}$ as the subsets of $\Sigma_{\lambda}$ where $u_{i_1}\geq u_{i_2}\geq u_{i_3}$, and $u_{i_3}\geq u_{i}$ for $i\in I\setminus \{i_1,i_2,i_3\}$.
\end{definition}

\begin{lemma}
    For each $i\in I$, and a fixed exponent $\sigma\in [1,3/2)$, and let $B_r(p)$ denote a Euclidean ball of radius $0<r\leq 1$. If $\lambda r$ is sufficiently large then 
    \[
    \left(r^{\sigma+1-n}\int_{G_{\lambda}^{(i_1,i_2,i_3)}\cap\{u_{i_2}\leq -\lambda^{-\frac{7}{8}}r^{\frac{1}{8}}\}\cap B_r(p)}(\max\{-V_{\lambda},0\})^{\sigma}\right)^{1/\sigma}\leq C\lambda re^{-(\lambda r)^{\frac{1}{8}}},
    \]
    where $C$ is independent of $\lambda$, $p$ is any point in Euclidean space.
\end{lemma}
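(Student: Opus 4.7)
The plan is to exploit the fact that on $G_\lambda^{(i_1,i_2,i_3)}\cap\{u_{i_2}\le -\lambda^{-7/8}r^{1/8}\}$ every exponential $e^{\lambda u_i}$ with $i\ne i_1$ is exponentially small, so that $V_\lambda$ reduces to its single-face value for the face $\{u_{i_1}=0\}$. That single-face value is nonnegative by Lemma \ref{lem:comparison}, and the only source of a negative contribution is a remainder of size $\lambda e^{-(\lambda r)^{1/8}}$, whence a routine surface-area bound closes the Morrey estimate.

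First I would establish pointwise smallness of the off-dominant exponentials. The ordering $u_{i_1}\ge u_{i_2}\ge u_{i_3}\ge u_i$ (for $i\notin\{i_1,i_2,i_3\}$) built into $G_\lambda^{(i_1,i_2,i_3)}$, combined with $u_{i_2}\le -\lambda^{-7/8}r^{1/8}$, forces $\lambda u_i\le -(\lambda r)^{1/8}$ for every $i\ne i_1$, hence $e^{\lambda u_i}\le e^{-(\lambda r)^{1/8}}$. Since $\sum_j e^{\lambda u_j}=1$ on $\Sigma_\lambda$, this gives $e^{\lambda u_{i_1}}=1+O(e^{-(\lambda r)^{1/8}})$, and the explicit formulas for $\nu$ and $N$ then yield $|\nu-\nu_{i_1}|+|N-N_{i_1}|_{g_0}=O(e^{-(\lambda r)^{1/8}})$. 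In particular both projections $|\pi(\nu_{i_1})|$ and $|P(N_{i_1})|$ are $O(e^{-(\lambda r)^{1/8}})$, since each vanishes identically in the pure $i_1$ limit.

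Next I would decompose $V_\lambda=V_\lambda^{(i_1)}+E_\lambda$ into the single-face contribution (only the $i_1$ summands kept in the numerators and denominators) and a remainder. In $V_\lambda^{(i_1)}$ the two $\lambda$-order terms cancel because each carries a factor $|\pi(\nu_{i_1})|$ or $|P(N_{i_1})|$ that vanishes once $\nu=\nu_{i_1}$; what survives is exactly $H_g-\|dN_{i_1}\|_{tr}$ up to a bounded, $\lambda$-independent error. Lemma \ref{lem:comparison}, applied at the boundary point on $\{u_{i_1}=0\}$ at Euclidean distance $O(1/\lambda)$ from $x$ together with continuity of the mean curvature, gives $V_\lambda^{(i_1)}\ge -C$ with $C$ independent of $\lambda$. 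Hence any $\lambda$-scale negative contribution to $V_\lambda$ must come from $E_\lambda$.

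For the error I note that every $\lambda$-order term of $E_\lambda$ carries either an $e^{\lambda u_i}$ with $i\ne i_1$, or a $|\pi(\nu_{i_1})|$, or a $|P(N_{i_1})|$, each of which is $O(e^{-(\lambda r)^{1/8}})$ by Step 1; the denominators are bounded below uniformly in $\lambda$ by the preceding lemma. Consequently $|E_\lambda|\le C\lambda e^{-(\lambda r)^{1/8}}$ pointwise, so $\max\{-V_\lambda,0\}\le C\lambda e^{-(\lambda r)^{1/8}}$ on the subset. Combining with the uniform surface-area bound $\mathcal H^{n-1}(\Sigma_\lambda\cap B_r(p))\le Cr^{n-1}$ (valid because, in the single-face regime, $\Sigma_\lambda$ has uniformly bounded second fundamental form and is locally a graph over $\{u_{i_1}=0\}$), we obtain
$$\int_{G_\lambda^{(i_1,i_2,i_3)}\cap\{u_{i_2}\le-\lambda^{-7/8}r^{1/8}\}\cap B_r(p)} (\max\{-V_\lambda,0\})^\sigma \le C(\lambda e^{-(\lambda r)^{1/8}})^\sigma r^{n-1},$$
and multiplying by $r^{\sigma+1-n}$ and taking a $\sigma$-th root yields the stated $C\lambda r e^{-(\lambda r)^{1/8}}$.

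The main obstacle is making Step 2 fully rigorous: one has to verify that $V_\lambda^{(i_1)}$ carries no uncancelled $\lambda$-scale piece, because any residual $\lambda$-term there would swamp the exponential decay of $E_\lambda$. This reduces to checking that every $\lambda$-order term of the full $V_\lambda$ indeed contains a factor $|\pi(\nu_i)|$ or $|P(N_i)|$, and that for $i=i_1$ those factors vanish on the nose when $\nu=\nu_{i_1}$. Once this cancellation is in hand, the exponential bound on $E_\lambda$ and the surface-area integration are essentially mechanical.
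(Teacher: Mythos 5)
Your decomposition into a single-face piece $V_\lambda^{(i_1)}$ plus a remainder $E_\lambda$, and the two key ingredients (Lemma \ref{lem:comparison} applied at a nearby boundary point; exponential smallness of the off-dominant exponentials), match the paper's strategy. However, there is a genuine gap in the quantitative bookkeeping that breaks the concluding pointwise estimate.

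You assert $V_\lambda^{(i_1)}\ge -C$ with $C$ independent of $\lambda$, and $|E_\lambda|\le C\lambda e^{-(\lambda r)^{1/8}}$, and from these you conclude $\max\{-V_\lambda,0\}\le C\lambda e^{-(\lambda r)^{1/8}}$. But the two premises only give $\max\{-V_\lambda,0\}\le C+C\lambda e^{-(\lambda r)^{1/8}}$, and for $\lambda r$ large the constant term dominates: $\lambda e^{-(\lambda r)^{1/8}}\to 0$, so the target bound $C\lambda r e^{-(\lambda r)^{1/8}}$ tends to zero while your lower bound on $V_\lambda^{(i_1)}$ does not. A constant-order deficit on any piece of $V_\lambda$ is fatal to the lemma; you even note that a residual $\lambda$-term would swamp $E_\lambda$, but in fact any $O(1)$ residual already does.

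The point you are missing is that the single-face piece is bounded below by an exponentially small quantity, not merely a constant. Since $\sum_j e^{\lambda u_j}=1$ on $\Sigma_\lambda$ and $e^{\lambda u_j}\le e^{-(\lambda r)^{1/8}}$ for $j\ne i_1$, one gets $e^{\lambda u_{i_1}}\ge 1-(k-1)e^{-(\lambda r)^{1/8}}$, hence $u_{i_1}(x)\ge -C\lambda^{-1}e^{-(\lambda r)^{1/8}}$. The nearest point $y\in\{u_{i_1}=0\}$ is therefore at Euclidean distance $O(\lambda^{-1}e^{-(\lambda r)^{1/8}})$ from $x$, not $O(1/\lambda)$ as you write. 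Lemma \ref{lem:comparison} gives $\Delta u_{i_1}-\nabla^2 u_{i_1}(\nu_{i_1},\nu_{i_1})-|\nabla u_{i_1}|\,\|dN_{i_1}\|_{tr}\ge 0$ at $y$, and Lipschitz continuity propagates this to $\ge -C\lambda^{-1}e^{-(\lambda r)^{1/8}}$ at $x$. Together with the $O(e^{-(\lambda r)^{1/8}})$ discrepancy from replacing $\nu$ by $\nu_{i_1}$ and the exponentially small $\lambda$-scale terms, this yields $V_\lambda(x)\ge -C\lambda e^{-(\lambda r)^{1/8}}$ pointwise, which is what the surface-area bound converts into the stated Morrey estimate. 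Your ``bounded, $\lambda$-independent error'' must be replaced by this sharper exponential bound; as written the step from $V_\lambda^{(i_1)}\ge -C$ to the final display is a non sequitur. (Peripherally, the area bound $|\Sigma_\lambda\cap B_r(p)|\le Cr^{n-1}$ is cleanest via convexity of $\Omega_\lambda$, which works for arbitrary $p$; the graph/bounded second fundamental form justification is unnecessary and is not uniform near edges, where the second fundamental form of $\Sigma_\lambda$ scales like $\lambda$.)
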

\begin{proof}
    For any $i\in I\setminus\{i_1\}$, $e^{\lambda u_i(x)}\leq e^{-(\lambda r)^{\frac{1}{8}}}$ for any $x\in G_{\lambda}^{(i_1,i_2,i_3)}$. 
    
    Note that
    \[
    |\nu-\nu_{i_1}|\leq Ce^{-(\lambda r)^{\frac{1}{8}}},\quad |N-N_{i_1}|_{g_0}\leq Ce^{-(\lambda r)^{\frac{1}{8}}},
    \]
    where $C$ is a constant independent of $\lambda$.

    We can then conclude that the absolute value of the following three terms 
    \begin{align*}
        &\lambda \frac{\sum_{i\in I}e^{\lambda u_i}|\nabla u_i|^2|\pi(\nu_i)|^2}{\left\vert\sum_{i\in I}e^{\lambda u_i}|\nabla u_i|\nu_i\right\vert}-\lambda \frac{\sum_{i\in I}e^{\lambda u_i}|\nabla u_i|^2|\pi(\nu_i)||P(N_i)|}{\left\vert\sum_{i\in I}e^{\lambda u_i}|\nabla u_i|N_i\right\vert_{g_0}}\\
        &-\frac{\sum_{i\in I}e^{\lambda u_i}|\nabla|\nabla u_i|||P(N_i)|}{\left\vert\sum_{i\in I}e^{\lambda u_i}|\nabla u_i|N_i\right\vert_{g_0}}
    \end{align*}
    is bounded from above by 
    \[
    C\lambda e^{-(\lambda r)^{\frac{1}{8}}}.
    \]

    Moreover, since $u_{i_1}(x)\geq -C\lambda^{-1}e^{-(\lambda r)^{\frac{1}{8}}}$, there exists a $y\in\mathbb{R}^n$, such that $u_{i_1}(y)=0$ and $d_{eucl}(x,y)\leq C\lambda^{-1}e^{-(\lambda r)^{\frac{1}{8}}}$. For $\lambda r$ sufficiently large, we have
    \[
    u_i(y)\leq u_i(x)+C\lambda^{-1}e^{-(\lambda r)^{\frac{1}{8}}}\leq -\lambda^{-\frac{7}{8}}r^{\frac{1}{8}}+C\lambda^{-1}e^{-(\lambda r)^{\frac{1}{8}}}\leq 0,\quad \forall i\in I\setminus\{i_1\}.
    \]
    We then have $y\in\partial\Omega$, and $u_{i_1}(y)=0$. 

    By Lemma \ref{lem:comparison}, we have 
    \begin{align*}
        \frac{\Delta u_{i_1}-\nabla^2u_{i_1}(\nu_{i_1},\nu_{i_1})}{|\nabla u_{i_1}|}(y)- \|dN_{i_1}(y)\|_{tr}\geq 0.
    \end{align*}
    We then obtain that at the point $y$
    \begin{align*}
        \Delta u_{i_1}-\nabla^2u_{i_1}(\nu_{i_1},\nu_{i_1})-|\nabla u_{i_1}|\|dN_{i_1}\|_{tr}\geq 0.
    \end{align*}
    Which implies that at the point $x$ we have
    \[
    \Delta u_{i_1}-\nabla^2u_{i_1}(\nu_{i_1},\nu_{i_1})-|\nabla u_{i_1}|\|dN_{i_1}\|_{tr}\geq -C\lambda^{-1}e^{-(\lambda r)^{\frac{1}{8}}}.
    \] 
    We then compute that
    \begin{align*}
        &\frac{\sum_{i\in I}e^{\lambda u_i}(\Delta u_i-\nabla^2u_i(\nu,\nu))}{\left\vert\sum_{i\in I}e^{\lambda u_i}|\nabla u_i|\nu_i\right\vert}-\frac{\sum_{i\in I}e^{\lambda u_i}|\nabla u_i|\|dN_i\|_{tr}}{\left\vert\sum_{i\in I}e^{\lambda u_i}|\nabla u_i|N_i\right\vert_{g_0}}\\
        &\geq \frac{e^{\lambda u_{i_1}}(\Delta u_{i_1}-\nabla^2 u_{i_1}(\nu_{i_1},\nu_{i_1}))}{\left\vert\sum_{i\in I}e^{\lambda u_i}|\nabla u_i|\nu_i\right\vert}-\frac{e^{\lambda u_{i_1}}|\nabla u_{i_1}|\|dN_{i_1}\|_{tr}}{\left\vert \sum_{i\in I}e^{\lambda u_i}|\nabla u_i|N_i\right\vert_{g_0}}-C e^{(\lambda r)^{-\frac{1}{8}}}\\
        &\geq C\left(\left\vert\sum_{i\in I}  e^{\lambda u_i}|\nabla u_i|N_i\right\vert_{g_0}-\left\vert\sum_{i\in I}e^{\lambda u_i}|\nabla u_i|\nu_i\right\vert\right)-Ce^{-(\lambda r)^{-\frac{1}{8}}}\\
        &\geq -Ce^{-(\lambda r)^{\frac{1}{8}}},
    \end{align*}
    Where $C$ is a positive constant independent of $\lambda$.
    
    At the point $x$, we have the following estimate
    \begin{align*}
        V_{\lambda}(x)\geq -C\lambda e^{-(\lambda r)^{-\frac{1}{8}}}-Ce^{-(\lambda r)^{\frac{1}{8}}}
    \end{align*}
    The Morrey norm of the negative part of $V_{\lambda}$ follows.
\end{proof}
\begin{remark}
    Since $\Sigma_{\lambda}$ bounds a convex domain in Euclidean space, $\Sigma_{\lambda}\cap B_r(p)$ has area at most $C r^{n-1}$, where $C$ only depends on $n$.
\end{remark}

\begin{lemma}
    For each $i\in I$, and a fixed exponent $\sigma\in [1,3/2)$, and let $B_r(p)$ denote a Euclidean ball of radius $0<r\leq 1$. If $\lambda r$ is sufficiently large then 
    \[
    \left(r^{\sigma+1-n}\int_{G_{\lambda}^{(i_1,i_2,i_3)}\cap\{u_{i_2}\geq -\lambda^{-\frac{7}{8}}r^{\frac{1}{8}}\}\cap\{u_{i_3}\leq -\lambda^{-\frac{3}{4}}r^{\frac{1}{4}}\}\cap B_r(p)}(\max\{-V_{\lambda},0\})^{\sigma}\right)^{1/\sigma}\leq C(\lambda r)^{\frac{1}{8}-\frac{7}{8\sigma}}
    \]
    where $C$ is independent of $\lambda$, $p$ is any point in Euclidean space.
\end{lemma}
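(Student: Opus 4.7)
The plan is to imitate the previous lemma but with two dominant face contributions instead of one, using the matching angle hypothesis to cancel the otherwise problematic $\lambda$-terms of $V_\lambda$ near the codimension-two edge $\{u_{i_1}=u_{i_2}=0\}$.

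First I would show that on the integration region only the $i_1$ and $i_2$ contributions matter. For every $i\notin\{i_1,i_2\}$ the ordering $u_{i_1}\geq u_{i_2}\geq u_{i_3}\geq u_i$ together with $u_{i_3}\leq -\lambda^{-3/4}r^{1/4}$ gives $e^{\lambda u_i(x)}\leq e^{-(\lambda r)^{1/4}}$, so every sum appearing in $V_\lambda$ agrees with its two-term truncation over $\{i_1,i_2\}$ up to an additive error of order $e^{-(\lambda r)^{1/4}}$, and the same holds for $\nu$ and $N$.

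Next I would exploit the matching angle hypothesis at the edge. For any $x$ in the region, since $u_{i_1}(x)\geq u_{i_2}(x)\geq -\lambda^{-7/8}r^{1/8}$, projecting along $\nabla u_{i_1}$ and $\nabla u_{i_2}$ yields a point $y\in\partial\Omega$ with $u_{i_1}(y)=u_{i_2}(y)=0$ and $d_{\mathrm{eucl}}(x,y)=O(\lambda^{-7/8}r^{1/8})$; the check $u_i(y)<0$ for $i\notin\{i_1,i_2\}$ goes through as in the previous lemma for $\lambda r$ large. At $y$ the matching angle hypothesis gives $g(\nu_{i_1},\nu_{i_2})=g_0(N_{i_1},N_{i_2})$, and smoothness of the $u_i$'s propagates this identity to $x$ with error $C\lambda^{-7/8}r^{1/8}$. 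Expanding the two $\lambda$-multiplied fractions of $V_\lambda$ through $|\sum\alpha_j\nu_j|^2=\sum_{j,l}\alpha_j\alpha_l\,g(\nu_j,\nu_l)$ and $|\pi(\nu_j)|^2=1-g(\nu,\nu_j)^2$ (with $\alpha_j=e^{\lambda u_j}|\nabla u_j|$), and their Euclidean counterparts with $N_j$, $P$, one checks that once $g(\nu_{i_1},\nu_{i_2})$ is set equal to $g_0(N_{i_1},N_{i_2})$ the two truncated fractions become identical. The resulting pointwise bound is $\max\{-V_\lambda,0\}(x)\leq C(\lambda r)^{1/8}$, since the remaining, non-$\lambda$, terms of $V_\lambda$ are controlled by a constant independent of $\lambda$ via Lemma~\ref{lem:comparison} applied at $y$.

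Finally, combining this pointwise bound with the area estimate $|\Sigma_\lambda\cap\{u_{i_2}\geq -\lambda^{-7/8}r^{1/8}\}\cap B_r(p)|\leq C\lambda^{-7/8}r^{1/8}\,r^{n-2}$ (the set lies in a Euclidean tube of half-width $C\lambda^{-7/8}r^{1/8}$ around the edge, intersected with a ball of radius $r$) gives
\[
\Big(r^{\sigma+1-n}\int(\max\{-V_\lambda,0\})^{\sigma}\Big)^{1/\sigma}\leq C(\lambda r)^{1/8}\bigl(\lambda^{-7/8}r^{1/8}\,r^{\sigma-1}\bigr)^{1/\sigma}\leq C(\lambda r)^{1/8-7/(8\sigma)},
\]
where we use $r\leq 1$ to absorb the extra factor of $r$. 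The main obstacle is the algebraic cancellation invoked above: individually the first and third terms of $V_\lambda$ are of order $\lambda$, and it is only the matching angle identity at the edge, propagated outward by smoothness, that forces their difference to be bounded by $C(\lambda r)^{1/8}$. Carrying this out requires a careful bookkeeping of the two metrics and of the projections $\pi, P$, and it is the unique place where the matching angle hypothesis enters essentially.
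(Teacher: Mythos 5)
Your argument is correct and reproduces the strategy of Brendle's Lemma 3.14, which the paper's own proof simply cites: the matching-angle identity holds at a nearby true edge point $y\in\{u_{i_1}=u_{i_2}=0\}$, propagates to $x$ with error $O(\lambda^{-7/8}r^{1/8})$ by Lipschitz continuity, forces the two $\lambda$-scaled terms of $V_\lambda$ to cancel up to $O((\lambda r)^{1/8})$, and this pointwise bound combined with the $O(\lambda^{-7/8}r^{1/8}r^{n-2})$ tube area estimate yields the stated Morrey bound after absorbing the surplus power of $r\le1$. One small point: Lemma~\ref{lem:comparison} is not actually needed to control the non-$\lambda$ terms of $V_\lambda$ in this regime; those are uniformly bounded independently of any mean-curvature comparison, because their numerators are bounded and their denominators are bounded below on $\Sigma_\lambda$ by the convexity lemma.
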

\begin{proof}
    The proof is the same as the proof of \textbf{Lemma 3.14} in \cite{brendle2023matchingangle}.
\end{proof}
\begin{lemma}
    For each $i\in I$, and a fixed exponent $\sigma\in [1,3/2)$, and let $B_r(p)$ denote a Euclidean ball of radius $0<r\leq 1$. If $\lambda r$ is sufficiently large then 
    \[
    \left(r^{\sigma+1-n}\int_{G_{\lambda}^{(i_1,i_2,i_3)}\cap\{u_{i_3}\geq -\lambda^{-\frac{3}{4}}r^{\frac{1}{4}}\}\cap B_r(p)}(\max\{-V_{\lambda},0\})^{\sigma}\right)^{1/\sigma}\leq C(\lambda r)^{1-\frac{3}{2\sigma}}
    \]
    where $C$ is independent of $\lambda$, $p$ is any point in Euclidean space.
\end{lemma}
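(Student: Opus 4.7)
\medskip
\noindent\emph{Proof proposal.}
The plan is to pair a crude pointwise bound on $V_\lambda$ with an area estimate for this near-vertex region, in the spirit of \textbf{Lemma 3.15} of Brendle \cite{brendle2023matchingangle} but adapted to nonlinear $u_i$. The previous two lemmas covered the exponentially small (near-face) and polynomially thin (near-edge) portions; what remains is a tubular neighborhood of the codimension-three stratum, where no delicate cancellation is available and everything must be absorbed into the measure of the integration domain.

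First I would record the trivial pointwise bound $|V_\lambda(x)| \leq C\lambda$ valid on all of $\Sigma_\lambda$. Every term in the expression for $V_\lambda$ is a quotient whose denominator is bounded below by the constants $C_1^{-1}, C_2^{-1}$ from the preceding lower-bound lemma, and whose numerator is a sum with weights $e^{\lambda u_i} \leq 1$ of the quantities $|\nabla u_i|^2 |\pi(\nu_i)|^2$, $\Delta u_i - \nabla^2 u_i(\nu,\nu)$, $|\nabla u_i|^2 |\pi(\nu_i)|\, |P(N_i)|$, $|\nabla|\nabla u_i||\, |P(N_i)|$, and $|\nabla u_i|\,\|dN_i\|_{tr}$, all uniformly bounded on the compact domain $\Omega$. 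Only the two terms carrying an explicit prefactor $\lambda$ drive the growth, giving $|V_\lambda| \leq C\lambda$ and therefore $(\max\{-V_\lambda,0\})^\sigma \leq C\lambda^\sigma$ on the region.

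The core step, which I expect to be the main obstacle, is the area estimate
\[
\mathcal{H}^{n-1}\!\bigl(R\bigr) \leq C\, r^{n-3}\, \delta^2, \qquad \delta := \lambda^{-3/4} r^{1/4},
\]
where $R$ denotes the region of integration in the lemma. On $R$ the chain $0 \geq u_{i_1} \geq u_{i_2} \geq u_{i_3} \geq -\delta$ forces $R$ into the $\delta$-tubular neighborhood of the codimension-three stratum $S := \{u_{i_1} = u_{i_2} = u_{i_3} = 0\}$. The transversality built into the definition of a Riemannian polytope type domain, together with the convexity of $\Omega$, makes $\nabla u_{i_1}, \nabla u_{i_2}, \nabla u_{i_3}$ linearly independent along $S$, so one can straighten coordinates near $S$ and treat $u_{i_1}, u_{i_2}, u_{i_3}$ as three of the ambient coordinate functions. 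A Fubini/coarea decomposition of $\Sigma_\lambda$ relative to the map $(u_{i_1}, u_{i_2}, u_{i_3}) : \Sigma_\lambda \to \mathbb{R}^3$ then yields the bound: the factor $r^{n-3}$ reflects the $(n-3)$-dimensional extent of $S \cap B_r(p)$, while only two powers of $\delta$ survive because the third normal direction is consumed by the normal direction of the hypersurface $\Sigma_\lambda$.

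Combining the two estimates, the integral in the lemma is bounded by $C\lambda^\sigma \cdot r^{n-3}\delta^2 = C \lambda^{\sigma - 3/2} r^{n - 5/2}$. Multiplying by $r^{\sigma + 1 - n}$ and taking the $\sigma$-th root produces exactly $C(\lambda r)^{1 - 3/(2\sigma)}$, as claimed. The delicate point I would need to verify carefully is that the Fubini/coarea decomposition is uniform in $\lambda$, i.e.\ that the Jacobian of $(u_{i_1}, u_{i_2}, u_{i_3})$ restricted to $\Sigma_\lambda$ stays bounded away from zero on the near-vertex region; this should follow from the linear independence of the $\nabla u_{i_j}$'s on $S$ together with a compactness argument.
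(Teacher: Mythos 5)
Your proposal follows the same strategy the paper invokes (which is Brendle's argument): a crude pointwise bound $\lvert V_{\lambda}\rvert \leq C\lambda$ on the near-vertex region combined with an area estimate of order $r^{n-3}\delta^{2}$ with $\delta = \lambda^{-3/4}r^{1/4}$, and the arithmetic indeed yields $C(\lambda r)^{1-\frac{3}{2\sigma}}$. Both ingredients and the conclusion are correct, and you rightly flag the uniform Jacobian lower bound as the point requiring care.

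One technical remark on the coarea step as you wrote it: the map $(u_{i_1},u_{i_2},u_{i_3})\colon \Sigma_{\lambda}\to\mathbb{R}^{3}$ is in fact degenerate when restricted to $T\Sigma_{\lambda}$, since the Euclidean normal of $\Sigma_{\lambda}$ is proportional to $\sum_{i}e^{\lambda u_i}\nabla u_i$, which near the codimension-three stratum lies (up to exponentially small error) in the span of $\nabla u_{i_1},\nabla u_{i_2},\nabla u_{i_3}$; so the restricted differential has rank at most $2$ and the coarea formula with three coordinate functions cannot be applied directly. You clearly have the right picture (you say the third normal direction is consumed by $\Sigma_{\lambda}$), but the clean way to formalize it is to run coarea with only two of the functions, say $(u_{i_2},u_{i_3})\colon \Sigma_{\lambda}\to\mathbb{R}^{2}$, whose restricted Jacobian is bounded below uniformly in $\lambda$ by the transversality/compactness argument you invoke; the constraint on $u_{i_1}$ is then automatically tight because $\sum_i e^{\lambda u_i}=1$ forces $u_{i_1}\in[-\lambda^{-1}\log k,0]$, a window even thinner than $\delta$. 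With that adjustment the area bound $\mathcal{H}^{n-1}(R)\leq Cr^{n-3}\delta^{2}$ follows and the rest of the computation stands. Also note that linear independence of $\nabla u_{i_1},\nabla u_{i_2},\nabla u_{i_3}$ along the stratum is not literally part of the paper's definition of a Riemannian polytope type domain (which only asks that $0$ be a regular value of each $u_i$ separately), so this transversality needs to be imposed or argued; you are right to single it out.
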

\begin{proof}
     The proof is the same as the proof of \textbf{Lemma 3.14} in \cite{brendle2023matchingangle}.
\end{proof}

\begin{remark}
    The proofs of the above two lemmas do not need the fact that $u_i$ is linear, the extra terms in $V_{\lambda}$ compared to that term in \cite{brendle2023matchingangle} are bounded, the proofs then follow from the area estimate of the integrated area.
\end{remark}

When $k=2$, the above estimates also hold.

Combining the above Lemmas we obtain
\begin{corollary}
Let us fix an exponent $\sigma\in[1,\frac{3}{2})$, we have
\[
\sup_{p\in\mathbb{R}^n}\sup_{0<r\leq 1}\left(r^{\sigma+1-n}\int_{\Sigma_{\lambda}\cap B_r(p)}(\max\{-V_{\lambda},0\})^{\sigma}\right)^{\frac{1}{\sigma}}\to 0
\]
as $\lambda\to\infty$.
\end{corollary}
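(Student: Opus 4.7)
My plan is to combine the three preceding Morrey estimates by a decomposition of $\Sigma_\lambda$ according to the dominant defining functions. For $k\geq 3$, the sets $G_\lambda^{(i_1,i_2,i_3)}$, indexed by ordered triples of distinct elements of $I=\{1,\ldots,k\}$, form a measurable partition of $\Sigma_\lambda$ with at most $k^3$ pieces; when $k=1$ the conclusion is immediate from Lemma \ref{lem:comparison}, and when $k=2$ an analogous two-index partition works. Within each $G_\lambda^{(i_1,i_2,i_3)}\cap B_r(p)$, the three lemmas stratify further by the thresholds $\{u_{i_2}\leq -\lambda^{-7/8}r^{1/8}\}$, $\{u_{i_2}>-\lambda^{-7/8}r^{1/8}\}\cap\{u_{i_3}\leq -\lambda^{-3/4}r^{1/4}\}$, and $\{u_{i_3}>-\lambda^{-3/4}r^{1/4}\}$, and each lemma furnishes the Morrey bound on its stratum.

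Summing the contributions via Minkowski's inequality in $L^\sigma$, with the uniform bound $k^3$ on the number of pieces, for $\lambda r$ above a fixed threshold $T_0$ I obtain
\[
\left(r^{\sigma+1-n}\int_{\Sigma_\lambda\cap B_r(p)}(\max\{-V_\lambda,0\})^\sigma\right)^{1/\sigma}\leq C\Bigl(\lambda r\,e^{-(\lambda r)^{1/8}}+(\lambda r)^{\tfrac{1}{8}-\tfrac{7}{8\sigma}}+(\lambda r)^{1-\tfrac{3}{2\sigma}}\Bigr),
\]
with $C$ independent of $\lambda$, $p$, and $r$. Since $\sigma\in[1,3/2)$, both polynomial exponents are strictly negative, and each of the three terms tends to $0$ as $\lambda r\to\infty$. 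Given $\varepsilon>0$ I can therefore choose $T_1\geq T_0$ so that this bound is less than $\varepsilon$ whenever $\lambda r\geq T_1$.

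The main obstacle is the complementary small-ball regime $0<r<T_1/\lambda$. The crude bound $\max\{-V_\lambda,0\}\leq C\lambda$ paired with $|\Sigma_\lambda\cap B_r(p)|\leq Cr^{n-1}$ only yields Morrey norm $\leq C\lambda r\leq CT_1$, which is not small by itself. My plan to circumvent this is to exploit the matching angle hypothesis, which gives the identity $\langle\nu_i,\nu\rangle=\langle N_i,N\rangle_{g_0}$ for every $x\in\Sigma_\lambda$ lying close enough to the singular stratum, hence $|\pi(\nu_i)|=|P(N_i)|$ and the two $\lambda$-order contributions to $V_\lambda$ approximately cancel; this reduces $\max\{-V_\lambda,0\}$ to $O(1)$ on the codim-$1$ and codim-$2$ smoothings and confines the only problematic locus to a thin codim-$3$ vertex neighborhood of $(n-1)$-dimensional area $\leq C\lambda^{-2}r^{n-3}$ inside $B_r(p)$. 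Plugging this refined area estimate into the Morrey integrand and combining with the large-$\lambda r$ bound from the three lemmas yields the claimed uniform convergence. The principal technical difficulty is to verify the needed cancellation of $\lambda$-order terms given that $|\nabla u_i|_g$ and $|\nabla u_i|_{g_0}$ may differ pointwise, and to patch the two regimes into a single estimate that is uniform in $p$ and $r$.
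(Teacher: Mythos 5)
Your treatment of the large-$\lambda r$ regime is correct and matches the paper: the sets $G_\lambda^{(i_1,i_2,i_3)}$ partition $\Sigma_\lambda$ into at most $k^3$ pieces, the three lemmas stratify each piece by the two thresholds, and since $\tfrac{1}{8}-\tfrac{7}{8\sigma}<0$ and $1-\tfrac{3}{2\sigma}<0$ for $\sigma\in[1,\tfrac32)$, all three bounds tend to $0$ as $\lambda r\to\infty$, so for any $\varepsilon>0$ you can pick $T_1$ with the Morrey norm $<\varepsilon$ whenever $\lambda r\geq T_1$.

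The gap is in the small-ball regime $\lambda r\leq T_1$. You use the crude pointwise bound $\max\{-V_\lambda,0\}\leq C\lambda$ and correctly observe that this only gives Morrey norm $\leq C\lambda r\leq CT_1$, which does not tend to $0$. But the paper has already proved a sharper pointwise bound precisely for this purpose: Lemma \ref{lem:oestimate} states $\sup_{\Sigma_\lambda}\max\{-V_\lambda,0\}\leq o(\lambda)$, where the improvement from $O(\lambda)$ to $o(\lambda)$ comes from the matching angle hypothesis forcing the two $\lambda$-order terms of $V_\lambda$ to cancel to leading order. Plugging $o(\lambda)$ into your own computation gives, for $\lambda r\leq T_1$,
\[
\left(r^{\sigma+1-n}\int_{\Sigma_\lambda\cap B_r(p)}(\max\{-V_\lambda,0\})^\sigma\right)^{1/\sigma}\leq C\,o(\lambda)\,r\leq C\,T_1\,\frac{o(\lambda)}{\lambda}\longrightarrow 0,
\]
uniformly in $p$ and $r$, which closes the argument. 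Your proposed workaround — establishing an approximate cancellation near codim-$1$ and codim-$2$ strata plus a refined area estimate $\leq C\lambda^{-2}r^{n-3}$ near codim-$3$ — is essentially an attempt to re-derive the content of Lemma \ref{lem:oestimate} from scratch, but it is left as a sketch and the claimed refinements are neither needed nor justified (the matching angle identity $\langle\nu_i,\nu\rangle=\langle N_i,N\rangle_{g_0}$ does not hold exactly on the smoothed surface $\Sigma_\lambda$, only approximately, and verifying the near-cancellation in the presence of the non-constant $|\nabla u_i|$ weights is exactly what Lemma \ref{lem:oestimate} handles). You should simply invoke Lemma \ref{lem:oestimate} together with the area bound $|\Sigma_\lambda\cap B_r(p)|\leq Cr^{n-1}$ in the complementary regime, which is the paper's intent in saying "combining the above Lemmas."
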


\begin{proof}[Proof of the Main Theorem] $ $
\begin{enumerate}
    \item \textit{We first prove the case when $n\geq 3$ is odd.}
    
With the Morrey Norm estimates, we apply Brendle's arguments in \textbf{Section 4} \cite{brendle2023matchingangle}, the proof is complete.
\item \textit{$n\geq 3$ is even}

$\Omega\times[0,1]$ is a \textit{Riemannian polytope type domain} of dimension $n+1$. Equip $\Omega\times[0,1]$ with metric $g+dt^2$, then both the \textit{Matching Angle hypothesis} and $H^2_{g+dt^2}(g+dt^2)\geq H_0^2g_0$ are satisfied, apply the odd dimensional result, the proof is completed.
\end{enumerate}

\end{proof}

\section{Further discussions}\label{sec: further discussion}
It is natural to consider an exterior generalization of Gromov's conjecture.
\begin{conjecture}\label{conj:non-compact}
    Suppose $\Omega$ is a convex smooth domain in $\mathbb R^n$, $n\geq 3$. Let $(\mathbb R^n\setminus \Omega,g)$ be an asymptotically flat Riemannian manifold, and $H_g^2g\leq H_0^2g_0$, then the ADM mass of $(\mathbb R^n\setminus \Omega,g)$ is non-negative, and it is zero if and only if $g$ is Euclidean.
\end{conjecture}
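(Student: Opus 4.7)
The plan is to adapt the spinor method of Section \ref{sec:pre} to the asymptotically flat exterior setting, producing a variant of Witten's positive mass theorem in which a Brendle-type boundary condition on $\partial\Omega$ absorbs the reversed mean curvature inequality $H_g^2g\leq H_0^2g_0$. I will assume implicitly that $R_g\geq 0$ on $\mathbb R^n\setminus\Omega$, as is standard in positive mass statements.

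First I would set up the spinor boundary value problem. Fix a parallel spinor $\psi_0$ on the Euclidean end with $|\psi_0|=1$, and seek an $m$-tuple $\psi$ satisfying $\mathcal D\psi=0$ on $(\mathbb R^n\setminus\Omega,g)$, with $\psi-\psi_0\in H^1_{-\tau}$ in the asymptotic region for some weight $\tau\in(\tfrac{n-2}{2},n-2)$, and with a boundary condition of the form $\chi\psi=\pm\psi$ on $\partial\Omega$. Here $\chi$ is the endomorphism of Section \ref{sec:pre}, built from the outward unit normal $\nu$ to $(\mathbb R^n\setminus\Omega,g)$ along $\partial\Omega$ (which points \emph{into} $\Omega$) together with a map $N:\partial\Omega\to S^{n-1}$ homotopic to the corresponding Euclidean exterior Gauss map. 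The choice of sign and the orientation of $N$ are dictated by the need to make the resulting boundary integral in the Dirac identity have the favorable sign under the reversed mean curvature inequality; the convexity of $\Omega$ ensures $N$ is well defined and smooth.

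Next I would establish solvability and invoke Witten's integration-by-parts identity. Solvability combines the Fredholm theory of Proposition \ref{prop:solvability} with standard weighted-Sobolev elliptic theory on the asymptotic end. Integrating by parts on the annular region $B_R\setminus\Omega$ and letting $R\to\infty$, the boundary contribution at infinity converges to the ADM mass term $c_n m |\psi_0|^2$, yielding schematically
\begin{align*}
c_n\,m\,|\psi_0|^2 &= \int_{\mathbb R^n\setminus\Omega}\Bigl(|\nabla\psi|^2+\tfrac14 R_g|\psi|^2\Bigr)dV_g \\
&\quad + \tfrac12\int_{\partial\Omega}\bigl(\|dN\|_{tr}-H_g\bigr)|\psi|^2\,da_g,
\end{align*}
where $H_g$ is measured with respect to $\nu$. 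A suitable exterior analogue of Lemma \ref{lem:comparison} built from the reversed hypothesis $H_g^2g\leq H_0^2g_0$ is then used to show the boundary integrand is nonnegative, and combined with $R_g\geq 0$ this forces $m\geq 0$. For rigidity, $m=0$ together with varying $\psi_0$ over a basis of parallel spinors at infinity produces a full set of parallel solutions $\psi$, hence flatness, and the asymptotic and boundary constraints identify $(\mathbb R^n\setminus\Omega,g)$ isometrically with the Euclidean exterior of $\Omega$.

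The principal obstacle is analytic: one must establish solvability and the correct Fredholm index for the Brendle-type boundary value problem on the noncompact exterior, combining the compact-domain argument of Proposition \ref{prop:solvability} with a relative-index argument from asymptotically flat theory. A secondary but substantive difficulty is the reversed comparison lemma: the Cauchy-Schwarz step in the proof of Lemma \ref{lem:comparison} does \emph{not} automatically reverse under $H_g^2g\leq H_0^2g_0$, so one must either produce a direct lower bound of the form $\|dN\|_{tr}\geq H_g$ by a different argument, or reformulate the boundary term (for instance by a different choice of $N$ or by combining several components of the integrand) in a way whose sign can still be controlled. These two ingredients together are what is needed to upgrade the special case handled in Section \ref{sec: further discussion} to the full conjecture.
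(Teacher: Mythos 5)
Your proposal is a program, not a proof, and its two load-bearing steps are precisely the ones that are missing. First, solvability: you say the exterior boundary value problem is handled by "combining Proposition \ref{prop:solvability} with standard weighted-Sobolev elliptic theory," but Proposition \ref{prop:solvability} is a compact-domain statement whose index lower bound depends crucially on $\Omega$ being convex and on $N$ being homotopic to the Gauss map of $\partial\Omega$ viewed as the boundary of a convex body; on $\mathbb R^n\setminus\Omega$ the relevant region is the complement of a convex set, the relevant normal and Gauss map are the inward ones, and there is no argument given that the Fredholm index (or even a nontrivial kernel of the adjoint problem) survives the passage to the noncompact exterior with the asymptotic condition $\psi\to\psi_0$. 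The paper itself flags exactly this as "the tricky part," and your proposal does not supply it. Second, the sign of the boundary term: the Witten-type identity you write needs $\|dN\|_{tr}\geq H_g$ pointwise on $\partial\Omega$, but the hypothesis $H_g^2g\leq H_0^2g_0$ does not give this; the Cauchy--Schwarz step in Lemma \ref{lem:comparison} produces an \emph{upper} bound on $\|dN\|_{tr}$ and, as you yourself note, does not reverse. Since $\|dN\|_{tr}$ is a sum of singular values (absolute values), a lower bound of the form $\|dN\|_{tr}\geq H_g$ requires a genuinely new argument, which you do not provide. With both pillars absent, the argument does not close; at best it restates the conjecture in spinorial form. (Minor points: you correctly add $R_g\geq 0$, which the statement omits, and one should also expect to need some mean-convexity of $\partial\Omega$ with respect to $g$.)

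For comparison, the paper does not prove the full statement at all: it is explicitly left as a conjecture, and only the special case $\mathbb R^3\setminus B_r(0)$ with $H_g>0$ is established (Proposition \ref{prop:special case}), by a completely different and more elementary route: run inverse mean curvature flow from $\partial M$, use Huisken--Ilmanen's Hawking mass monotonicity and $\lim_{t\to\infty}m_H(\Sigma_t)\leq m_{ADM}$, and observe that the hypothesis $H_g^2g\leq H_0^2g_0$ on the round sphere gives $\int_{\partial M}H_g^2\,d\sigma_g\leq\int_{\partial M}H_0^2\,d\sigma_{g_0}=16\pi$, so $m_H(\Sigma_0)\geq 0$. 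That argument buys an unconditional result in its narrow setting (dimension $3$, round inner boundary, mean convexity) without any boundary-value spinor analysis, whereas your spinor plan is the natural route to the general $n$-dimensional statement --- but only once the exterior Fredholm theory and the reversed comparison inequality are actually established.
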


\begin{remark}
    This conjecture should also extend to \textit{Riemannian polytope type domain}, and the Morrey norm estimates are not very different from Section \ref{sec:morrey norm}. The tricky part is the non-compact version of Proposition \ref{prop:solvability}.
\end{remark}

We prove a special case of Conjecture \ref{conj:non-compact}.

\begin{proposition}\label{prop:special case}
    Suppose $(\mathbb R^3\setminus B_r(0),g)$ is asymptotically flat, and $H_g^2g\leq H_0^2g_0$, $H_g>0$, then $m_{ADM}(g)\geq 0$, and the equality is achieved if and only if $g$ is Euclidean.
\end{proposition}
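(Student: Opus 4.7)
The plan is to adapt the Huisken-Ilmanen weak inverse mean curvature flow (IMCF) argument used in the proof of the Riemannian Penrose inequality, implicitly assuming $R_g \geq 0$ as in Conjecture \ref{conj:non-compact}. The strategy has three steps: bound the Hawking mass at the inner boundary from below using the comparison $H_g^2 g \leq H_0^2 g_0$, apply Geroch monotonicity along weak IMCF, and pass to the limit at infinity.

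For the initial estimate, I write $g|_{T\partial B_r(0)} = G\cdot g_0|_{T\partial B_r(0)}$ with $G$ a symmetric positive-definite endomorphism of the $2$-dimensional tangent space. The hypothesis $H_g^2 g \leq H_0^2 g_0$ then reads $H_g^2\, G \leq H_0^2\, \mathrm{Id}$, and taking determinants gives $H_g^2\sqrt{\det G} \leq H_0^2$. Since $dA_g = \sqrt{\det G}\, dA_{g_0}$ and $\int_{\partial B_r(0)} H_0^2\, dA_{g_0} = 16\pi$, this yields $\int_{\partial B_r(0)} H_g^2\, dA_g \leq 16\pi$, hence
\[
m_H(\partial B_r(0)) \;=\; \sqrt{\tfrac{|\partial B_r(0)|_g}{16\pi}}\,\Bigl(1 - \tfrac{1}{16\pi}\int_{\partial B_r(0)} H_g^2\, dA_g\Bigr) \;\geq\; 0.
\]

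Since $H_g > 0$ on $\partial B_r(0)$, the weak IMCF starting from $\partial B_r(0)$ produces a proper family $\{\partial E_t\}$ sweeping out $\mathbb R^3 \setminus B_r(0)$. Geroch monotonicity, valid in the weak formulation under $R_g \geq 0$, combined with the asymptotic comparison $\limsup_{t\to\infty} m_H(\partial E_t) \leq m_{ADM}(g)$ of Huisken-Ilmanen, yields
\[
m_{ADM}(g) \;\geq\; \limsup_{t\to\infty} m_H(\partial E_t) \;\geq\; m_H(\partial B_r(0)) \;\geq\; 0.
\]
For the rigidity, if $m_{ADM}(g) = 0$, then $m_H(\partial E_t)$ is constantly zero, Geroch monotonicity is saturated, and the Huisken-Ilmanen equality analysis forces $R_g \equiv 0$ with totally umbilic, constant-mean-curvature level sets, identifying $g$ with a flat warped product. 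Asymptotic flatness and simple connectedness of $\mathbb R^3\setminus B_r(0)$ then force $g$ to be the Euclidean metric. In addition, $m_H(\partial B_r(0)) = 0$ saturates the determinant inequality above and gives $H_g^2 g = H_0^2 g_0$ on the inner boundary, consistent with this conclusion.

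The main obstacle is the careful handling of the initial boundary in the weak formulation: $\partial B_r(0)$ need not be outer-minimizing with respect to $g$, so one must replace it with its strictly outer-minimizing hull and verify that the Hawking mass does not decrease at the initial jump. Strict mean-convexity $H_g > 0$ should reduce this to a routine adaptation of Huisken-Ilmanen's original treatment of the horizon, but the details need to be checked; one should also confirm that no minimal surface sits strictly outside $\partial B_r(0)$ so that IMCF indeed exhausts the end.
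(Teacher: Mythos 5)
Your proposal follows essentially the same route as the paper: start weak IMCF from the inner sphere, invoke Geroch monotonicity of the Hawking mass and the Huisken--Ilmanen asymptotic comparison, and use the boundary hypothesis $H_g^2g\leq H_0^2g_0$ to show $\int_{\partial B_r}H_g^2\,dA_g\leq 16\pi$, hence $m_H(\partial B_r)\geq 0$; you additionally spell out the determinant argument that the paper leaves implicit. Your closing caveats are well taken and in fact point to gaps in the paper's own proof as well: the hypothesis $R_g\geq 0$ is used (via Geroch monotonicity and the cited Theorem) but is not stated in the Proposition, and the initial surface $\partial B_r(0)$ must be replaced by its strictly outward-minimizing hull before Theorem \ref{thm:imcf} applies, with a check that the Hawking mass does not drop across the jump --- neither point is addressed in the paper.
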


To prove the special case, we need to use the Hawking mass monotonicity along inverse mean curvature flow.

\begin{theorem}[Huisken-Ilmanen \cite{huisken2001inverse}]\label{thm:imcf}
    Suppose $(N,h)$ is an asymptotically flat $3$-manifold with non-negative scalar curvature. If $E_0$ is an open precompact minimizing hull in $N$ having a smooth boundary $\partial E_0$, then there is a precompact locally lipschitz $\phi$ satisfying:
    \begin{enumerate}
        \item For $t\geq 0$, $\Sigma_t=\partial\{\phi<t\}$ defines an increasing family of $C^{1,\alpha}$ surfaces such that $\Sigma_0=\partial E_0$;
        \item For almost all $t\geq 0$, the mean curvature of $\Sigma_t$ is $|\nabla \phi|_{\Sigma_t}$ in the distributional sense;
        \item The Hawking mass
        \[
        m_H(\Sigma_t):=\sqrt{\frac{|\Sigma_t|}{(16\pi)^3}}\left(16\pi-\int_{\Sigma_t}H^2d\sigma_h\right)
        \]
        is a nondecreasing function of $t\geq 0$ provided the Euler characteristic $\chi(\Sigma_t)\leq 2$ for all $t\geq 0$;
        \item $\lim_{t\to\infty}m_H(\Sigma_t)\leq m_{ADM}(N)$.
    \end{enumerate}
\end{theorem}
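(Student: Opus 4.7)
The plan is to run Huisken–Ilmanen's weak inverse mean curvature flow (Theorem \ref{thm:imcf}) starting from the boundary sphere $\Sigma_0 := \partial B_r(0)$ and to combine the Hawking mass monotonicity with the boundary comparison $H_g^2 g \leq H_0^2 g_0$ in order to force $m_{ADM}(g)\geq 0$.

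First I verify that the flow can be initiated. Since $H_g > 0$ on $\Sigma_0$, after replacing $\Sigma_0$ by its strictly outward minimizing hull in $(\mathbb R^3\setminus B_r(0), g)$ if necessary (this preserves the area and can only decrease $\int H^2\,dA$, hence can only increase the Hawking mass) the hypotheses of Theorem \ref{thm:imcf} are met. The initial surface is a topological sphere and the flow propagates into the asymptotically flat end, so $\chi(\Sigma_t)\leq 2$ holds along the flow.

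Next I estimate the initial Hawking mass. Taking determinants of the $2\times 2$ restrictions of the matrix inequality $H_g^2 g\leq H_0^2 g_0$ on $T\partial\Omega$ yields the pointwise comparison of area elements $H_g^2\, dA_g\leq H_0^2\, dA_{g_0}$ on $\Sigma_0$. Using $|H_0| = 2/r$ on the round Euclidean sphere, integration gives $\int_{\Sigma_0} H_g^2\, dA_g \leq 16\pi$, hence $m_H(\Sigma_0)\geq 0$. Combining this with parts (3) and (4) of Theorem \ref{thm:imcf} gives
\begin{equation*}
    m_{ADM}(g) \;\geq\; \lim_{t\to\infty} m_H(\Sigma_t) \;\geq\; m_H(\Sigma_0) \;\geq\; 0,
\end{equation*}
which is the inequality part of the proposition.

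The rigidity case $m_{ADM}(g)=0$ is the main obstacle. Then every inequality in the above chain collapses and $m_H(\Sigma_t)\equiv 0$ for all $t\geq 0$, so Geroch's monotonicity identity must be saturated pointwise along the flow. This forces $R_g\equiv 0$, each $\Sigma_t$ to be totally umbilic with constant mean curvature on each leaf, and $|\nabla\phi|$ to be constant on each $\Sigma_t$. Standard arguments then exhibit $(\mathbb R^3\setminus B_r(0),g)$ as a rotationally symmetric warped product, and the identity $H(t)^2|\Sigma_t|_g = 16\pi$ together with $|\Sigma_t|_g = |\Sigma_0|_g\, e^t$ pins the warping function to the Euclidean one. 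Saturation of the area-form inequality forces the pointwise equality $H_g^2\, dA_g = H_0^2\, dA_{g_0}$ on $\Sigma_0$, and in combination with the matrix inequality $H_g^2 g\leq H_0^2 g_0$ (now with equal determinants) this upgrades to $H_g^2 g = H_0^2 g_0$ on $T\partial\Omega$, identifying $(\Sigma_0,g)$ isometrically with the Euclidean round sphere $\partial B_r(0)$; the warped-product structure then yields $g = g_0$. The technical heart is to make this rigidity argument rigorous at the $C^{1,\alpha}$ regularity of the weak flow and to carefully pass the pointwise rigidity information back to the initial surface.
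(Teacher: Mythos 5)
Your proposal follows essentially the same route as the paper's proof of Proposition \ref{prop:special case}: run the weak inverse mean curvature flow of Theorem \ref{thm:imcf} from the inner boundary, use the Hawking mass monotonicity and $\lim_{t\to\infty}m_H(\Sigma_t)\leq m_{ADM}$, and bound the initial Hawking mass by $\int_{\partial M}H_g^2\,d\sigma_g\leq \int_{\partial M}H_0^2\,d\sigma_{g_0}=16\pi$ (which you justify via the determinant comparison that the paper leaves implicit), concluding $m_{ADM}(g)\geq 0$; your discussion of the rigidity case is likewise at least as detailed as the paper's one-line assertion. The only inaccuracy is your claim that passing to the strictly outward minimizing hull preserves area and can only increase the Hawking mass --- in general the area can drop --- but since $\int H^2$ does not increase under the replacement, the nonnegativity of the initial Hawking mass persists, which is all the argument requires.
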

\begin{proof}[Proof of Proposition \ref{conj:non-compact}]
    Denote $\mathbb R^3\setminus B_r(0)$ as $M$, since $\partial M$ is mean convex w.r.t. $g$, we can run the IMCF starting at $\partial M$, by Theorem \ref{thm:imcf}, we obtain
    \begin{align*}
        m_{ADM}(g)&\geq \lim_{t\to\infty}m_H(\Sigma_t)\\
               &\geq m_H(\Sigma_0)\\
               &=\sqrt{\frac{|\partial M|}{(16\pi)^3}}\left(16\pi-\int_{\partial M}H^2d\sigma_g\right).
    \end{align*}
    Since $H_g^2g\leq H_0^2g_0$ on $\partial M$, we have
    \[
    \int_{\partial M}H_g^2d\sigma_g\leq \int_{\partial M}H_0^2d\sigma_{g_0}=16\pi
    \]
    and thus 
    \[
    m_{ADM}(g)\geq 0.
    \]
    The equality is obtained if and only if the metric $g$ is Euclidean.
\end{proof}

\bibliographystyle{plain}
\bibliography{reference}

\begin{thebibliography}{10}

\bibitem{brendle2023gromovsrigiditytheorempolytopes}
S.~Brendle and Y.~Wang.
\newblock On gromov's rigidity theorem for polytopes with acute angles, 2023.

\bibitem{brendle2023matchingangle}
Simon Brendle.
\newblock Scalar curvature rigidity of convex polytopes.
\newblock {\em Inventiones mathematicae}, 235(2):669--708, 2024.

\bibitem{chai2023scalar}
Xiaoxiang Chai and Gaoming Wang.
\newblock Scalar curvature comparison of rotationally symmetric sets.
\newblock {\em arXiv preprint arXiv:2304.13152}, 2023.

\bibitem{gromov2014dirac}
Misha Gromov.
\newblock Dirac and plateau billiards in domains with corners.
\newblock {\em Central European Journal of Mathematics}, 12:1109--1156, 2014.

\bibitem{gromov2021lecturesscalarcurvature}
Misha Gromov.
\newblock Four lectures on scalar curvature, 2021.

\bibitem{gromov2023convexpolytopesdihedralangles}
Misha Gromov.
\newblock Convex polytopes, dihedral angles, mean curvature and scalar
  curvature, 2023.

\bibitem{huisken2001inverse}
Gerhard Huisken and Tom Ilmanen.
\newblock The inverse mean curvature flow and the riemannian penrose
  inequality.
\newblock {\em Journal of Differential Geometry}, 59(3):353--437, 2001.

\bibitem{li2020polyhedron}
Chao Li.
\newblock A polyhedron comparison theorem for 3-manifolds with positive scalar
  curvature.
\newblock {\em Inventiones mathematicae}, 219:1--37, 2020.

\bibitem{li2024dihedral}
Chao Li.
\newblock The dihedral rigidity conjecture for $ n $-prisms.
\newblock {\em Journal of Differential Geometry}, 126(1):329--361, 2024.

\bibitem{shi2002positive}
Yuguang Shi and Luen-Fai Tam.
\newblock Positive mass theorem and the boundary behaviors of compact manifolds
  with nonnegative scalar curvature.
\newblock {\em Journal of Differential Geometry}, 62(1):79--125, 2002.

\bibitem{wang2021gromov}
Jinmin Wang, Zhizhang Xie, and Guoliang Yu.
\newblock On gromov's dihedral extremality and rigidity conjectures.
\newblock {\em arXiv preprint arXiv:2112.01510}, 2021.

\end{thebibliography}
\end{document}